\documentclass{article}
\usepackage{amsfonts}
\usepackage{amssymb}
\usepackage{graphicx}
\usepackage{amsmath}
\usepackage{url}

\setcounter{MaxMatrixCols}{10}


\marginparwidth 0pt
\oddsidemargin  0pt
\evensidemargin  0pt
\marginparsep 0pt
\topmargin   0pt
\textwidth   6.5in
\textheight  8.5 in
\newtheorem{theorem}{Theorem}

\newtheorem{corollary}[theorem]{Corollary}

\newtheorem{lemma}[theorem]{Lemma}

\newtheorem{proposition}[theorem]{Proposition}

\newenvironment{proof}[1][Proof]{\textbf{#1.} }{\ \rule{0.5em}{0.5em}}
\input{tcilatex}
\begin{document}

\title{Generalized Stirling Numbers I}
\author{Claudio Pita-Ruiz \\
Universidad Panamericana.\\
Facultad de Ingenier\'{\i}a.\\
Augusto Rodin 498, M\'{e}xico,\\
Ciudad de Mexico, 03920,\\
M\'{e}xico.\\
email: cpita@up.edu.mx}
\date{}
\maketitle

\begin{abstract}
We consider generalized Stirling numbers of the second kind $%
S_{a,b,r}^{\alpha _{s},\beta _{s},r_{s},p_{s}}\left( p,k\right) $, $%
k=0,1,\ldots .rp+\sum_{s=2}^{L}r_{s}p_{s}$, where $a,b,\alpha _{s},\beta
_{s} $ are complex numbers, and $r,p,r_{s},p_{s}$ are non-negative integers
given, $s=2,\ldots ,L$. (The case $a=1,b=0,r=1,r_{s}p_{s}=0$, corresponds to
the standard Stirling numbers $S\left( p,k\right) $.) The numbers $%
S_{a,b,r}^{\alpha _{s},\beta _{s},r_{s},p_{s}}\left( p,k\right) $ are
connected with a generalization of Eulerian numbers and polynomials we
studied in previous works. This link allows us to propose (first, and then
to prove, specially in the case $r=r_{s}=1$) several results involving our
generalized Stirling numbers, including several families of new recurrences
for Stirling numbers of the second kind. In a future work we consider the
recurrence and the differential operator associated to the numbers $%
S_{a,b,r}^{\alpha _{s},\beta _{s},r_{s},p_{s}}\left( p,k\right) $.
\end{abstract}

\section{\label{Sec1}Introduction}

Throughout the work, $L$ will denote an arbitrary positive integer $\geq 2$
given, $r,r_{s}$ and $p,p_{s}$ will denote non-negative integers given, and
we will write $\sigma $ for the sum $\sum_{s=2}^{L}r_{s}p_{s}.$

Stirling numbers are nice mathematical objects studied along the years: they
contain important combinatorial information, and they have shown to be
connected with many other important mathematical and physical objects. (For
a comprehensive study of Stirling numbers, including connections with
physics, see \cite{M-S} and the hundreds of references therein.) Many
generalizations of Stirling numbers are known nowadays. In 1998 Hsu and
Shiue \cite{H-S1} presented a unified approach containing several important
generalizations of Stirling numbers studied before by different authors
(Carlitz \cite{Ca1,Ca2,Ca3}, Howard \cite{How}, Gould-Hopper \cite{GouHop},
Riordan \cite{Rio}, Charalambides \cite{Char1,Char-Kou,Char-Singh}, Koutras 
\cite{Kou}, among others). We also mention the work of P. Blasiak \cite{Bl1}%
. Both, the Hsu and Shiue, and Blasiak works have some (natural)
intersections with the generalization we consider in this work. However, all
these works run in different directions.

We will be dealing with the $Z$-Transform (see \cite{Gr,Vi}), which is a
function $\mathcal{Z}$ that maps complex sequences $a_{n}=\left(
a_{0},a_{1},\ldots \right) $ into complex functions $\mathcal{Z}\left(
a_{n}\right) \left( z\right) $ (or simply $\mathcal{Z}\left( a_{n}\right) $%
), given by the Laurent series $\mathcal{Z}\left( a_{n}\right) \left(
z\right) =\sum_{n=0}^{\infty }\frac{a_{n}}{z^{n}}$ (called $Z$-Transform of
the sequence $a_{n},$ defined in the exterior of the circle of convergence
of the Taylor series $\sum_{n=0}^{\infty }a_{n}z^{n}$ ---the generating
function of the sequence $a_{n}$---). If $\mathcal{Z}\left( a_{n}\right) =%
\mathcal{A}\left( z\right) $, we can also write $a_{n}=\mathcal{Z}%
^{-1}\left( \mathcal{A}\left( z\right) \right) $, and we say that the
complex sequence $a_{n}$ is the Inverse $Z$-Transform of the complex
function $\mathcal{A}\left( z\right) $. For example, the sequence $\lambda
^{n}$ (where $\lambda $ is a given non-zero complex number), has $Z$%
-transform%
\begin{equation}
\mathcal{Z}\left( \lambda ^{n}\right) =\sum_{n=0}^{\infty }\frac{\lambda ^{n}%
}{z^{n}}=\frac{1}{1-\frac{\lambda }{z}}=\frac{z}{z-\lambda },  \label{1.7}
\end{equation}%
defined for $\left\vert z\right\vert >\left\vert \lambda \right\vert $. In
particular, the $Z$-transform of the constant sequence $1$ is%
\begin{equation}
\mathcal{Z}\left( 1\right) =\frac{z}{z-1}.  \label{1.8}
\end{equation}

Besides the natural properties of linearity and injectivity, the $Z$%
-transform has the following two important properties to be used in this
work (which formal proofs are easy exercises left to the reader):

\begin{enumerate}
\item (Advance-shifting property) If $\mathcal{Z}\left( a_{n}\right) =%
\mathcal{A}\left( z\right) $, and $k$ is a non-negative integer given, we
have 
\begin{equation}
\mathcal{Z}\left( a_{n+k}\right) =z^{k}\left( \mathcal{A}\left( z\right)
-\sum_{j=0}^{k-1}\frac{a_{j}}{z^{j}}\right) .  \label{1.9}
\end{equation}

\item (Multiplication by the sequence $n$) If $\mathcal{Z}\left(
a_{n}\right) =\mathcal{A}\left( z\right) $, then 
\begin{equation}
\mathcal{Z}\left( na_{n}\right) =-z\frac{d}{dz}\mathcal{A}\left( z\right) .
\label{1.10}
\end{equation}
\end{enumerate}

From (\ref{1.8}) and (\ref{1.10}), we see that the $Z$-transform of the
sequence $n$ is 
\begin{equation}
\mathcal{Z}\left( n\right) =-z\frac{d}{dz}\frac{z}{z-1}=\frac{z}{\left(
z-1\right) ^{2}}.  \label{1.15}
\end{equation}

The $Z$-transform of the sequence $\binom{n}{r}$, where $r$ is a
non-negative integer given, is%
\begin{equation}
\mathcal{Z}\left( \dbinom{n}{r}\right) =\frac{z}{\left( z-1\right) ^{r+1}}.
\label{1.18}
\end{equation}

(The cases $r=0$ and $r=1$ correspond to (\ref{1.8}) and (\ref{1.15}),
respectively. The rest is an easy induction on $r$ left to the reader.)
According to the advance-shifting property (\ref{1.9}), together with (\ref%
{1.18}), we see that for $0\leq k\leq r$, we $Z$-transform of the sequence $%
\binom{n+k}{r}$ is%
\begin{equation}
\mathcal{Z}\left( \dbinom{n+k}{r}\right) =\frac{z^{k+1}}{\left( z-1\right)
^{r+1}}.  \label{1.19}
\end{equation}

In Section \ref{Sec2} we introduce the generalized Stirling numbers of the
second kind $S_{a,b,r}^{\alpha _{s},\beta _{s},r_{s},p_{s}}\left( p,k\right) 
$. The definition we give for $S_{a,b,r}^{\alpha _{s},\beta
_{s},r_{s},p_{s}}\left( p,k\right) $ is related to generalized Eulerian
numbers $A_{a,b,r}^{\alpha _{s},\beta _{s},r_{s},p_{s}}\left( p,k\right) $
(formula (\ref{2.21})), but soon we show that our generalized Stirling
numbers have an explicit formula that generalizes the known explicit formula
for $S\left( p,k\right) $ (proposition \ref{Prop2.1}). In Section \ref{Sec3}
we consider the case $r=r_{s}=1$, and we prove several results involving the
GSN $S_{a,b,1}^{\alpha _{s},\beta _{s},1,p_{s}}\left( p,k\right) $. Some of
these results are proved by induction: we mention that the way we arrived to
them came from a previous work (not included here) with generalized Eulerian
numbers and polynomials \cite{Pi1,Pi2}, together with the connection
described in Section \ref{Sec2}. Finally, in Section \ref{Sec5} we state the
results about the recurrence and the differential operator associated to the
generalized Stirling numbers $S_{a,b,r}^{\alpha _{s},\beta
_{s},r_{s},p_{s}}\left( p,k\right) $ (these results are the main topics of
the second part of this work \cite{Pi3}.) Also, in a future work we consider
the generalized Bell numbers resulting of the generalization of Stirling
numbers of this work \cite{Pi4}.

\section{\label{Sec2}The Generalized Stirling Numbers}

In a recent work \cite{Pi1}, we considered the \textit{Generalized Eulerian
Numbers} (GEN, for short) $A_{a,b,r}^{\alpha _{s},\beta
_{s},r_{s},p_{s}}\left( p,i\right) $, $i=0,1,\ldots ,rp+\sigma $ (where $%
a,b,\alpha _{s},\beta _{s}$ are complex numbers, and $r,p,r_{s},p_{s}$ are
non-negative integers), defined as the coefficients in the expansion of the $%
\left( rp+\sigma \right) $-th degree polynomial $\binom{an+b}{r}%
^{p}\prod_{s=2}^{L}\binom{\alpha _{s}n+\beta _{s}}{r_{s}}^{p_{s}}$ in terms
of the basis $\mathcal{B}_{1}=\left\{ \binom{n+rp+\sigma -i}{rp+\sigma }%
,i=0,1,\ldots ,rp+\sigma \right\} $ (of the vector space $\mathcal{P}%
_{rp+\sigma }$ of $n$-polynomials of degree $\leq rp+\sigma $; see \cite{St}%
, p. 208, 4.3(b)), that is%
\begin{equation}
\binom{an+b}{r}^{p}\prod_{s=2}^{L}\binom{\alpha _{s}n+\beta _{s}}{r_{s}}%
^{p_{s}}=\sum_{i=0}^{rp+\sigma }A_{a,b,r}^{\alpha _{s},\beta
_{s},r_{s},p_{s}}\left( p,i\right) \binom{n+rp+\sigma -i}{rp+\sigma }.
\label{2.1}
\end{equation}

When $r_{s}p_{s}=0$, $s=2,3,\ldots ,L$, we write the GEN as $A_{a,b,r}\left(
p,i\right) $, $i=0,1,\ldots ,rp$. Clearly, the numbers $A_{1,0,1}\left(
p,i\right) $, $i=0,1,\ldots ,p$, correspond to the standard Eulerian
numbers, that we denote as $A\left( p,i\right) $. In this case, expression (%
\ref{2.1}) is just Worpitzky identity \cite{Wo}: $n^{p}=\sum_{i=0}^{p}A%
\left( p,i\right) \binom{n+p-i}{p}$. We have the explicit formula (see \cite%
{Pi1}) 
\begin{equation}
A_{a,b,r}^{\alpha _{s},\beta _{s},r_{s},p_{s}}\left( p,i\right)
=\sum_{j=0}^{i}\left( -1\right) ^{j}\binom{rp+\sigma +1}{j}\binom{a\left(
i-j\right) +b}{r}^{p}\prod_{s=2}^{L}\binom{\alpha _{s}\left( i-j\right)
+\beta _{s}}{r_{s}}^{p_{s}}.  \label{2.4}
\end{equation}

According to (\ref{1.19}), the $Z$-transform of the sequence $\binom{an+b}{r}%
^{p}\prod_{s=2}^{L}\binom{\alpha _{s}n+\beta _{s}}{r_{s}}^{p_{s}}$ is%
\begin{equation}
\mathcal{Z}\left( \binom{an+b}{r}^{p}\prod_{s=2}^{L}\binom{\alpha
_{s}n+\beta _{s}}{r_{s}}^{p_{s}}\right) =\frac{z\sum_{i=0}^{rp+\sigma
}A_{a,b,r}^{\alpha _{s},\beta _{s},r_{s},p_{s}}\left( p,i\right)
z^{rp+\sigma -i}}{\left( z-1\right) ^{rp+\sigma +1}}.  \label{2.101}
\end{equation}

The polynomial%
\begin{equation}
\mathbb{P}_{a,b,r,p}^{\alpha _{s},\beta _{s},r_{s},p_{s}}\left( z\right)
=\sum_{i=0}^{rp+\sigma }A_{a,b,r}^{\alpha _{s},\beta _{s},r_{s},p_{s}}\left(
p,i\right) z^{rp+\sigma -i},  \label{2.2}
\end{equation}%
is the \textit{Generalized Eulerian Polynomial} (GEP, for short). When $%
r_{s}p_{s}=0$, $s=2,3,\ldots ,L$, we write the GEP (\ref{2.2}) as $\mathbb{P}%
_{a,b,r,p}\left( z\right) $. These polynomials were studied in \cite{Pi2}.

Inspired by the well-known case, in which Stirling numbers of the second
kind $S\left( p,k\right) $, $k=0,1,\ldots ,p$, appear in the coefficients of
the expansion of $n^{p}$ in terms of the basis $\mathcal{B}_{2}=\left\{ 
\binom{n}{k},k=0,1,\ldots ,p\right\} $ (of the vector space $\mathcal{P}_{p}$
---of polynomials of degree $\leq p$---), namely $n^{p}=\sum_{k=0}^{p}k!S%
\left( p,k\right) \binom{n}{k}$, we consider the basis $\mathcal{B}%
_{2}=\left\{ \binom{n}{k},k=0,1,\ldots ,rp+\sigma \right\} $ of the vector
space $\mathcal{P}_{rp+\sigma }$ (see \cite{St}, p. 209, 4.3(d)), then write
the $\left( rp+\sigma \right) $-th degree $n$-polynomial $\binom{an+b}{r}%
^{p}\prod_{s=2}^{L}\binom{\alpha _{s}n+\beta _{s}}{r_{s}}^{p_{s}}$ in terms
of $\mathcal{B}_{2}$, and define the\textit{\ Generalized Stirling Numbers
of the Second Kind }(GSN, for short) $S_{a,b,r}^{\alpha _{s},\beta
_{s},r_{s},p_{s}}\left( p,k\right) $, by means of the expansion

\begin{equation}
\binom{an+b}{r}^{p}\prod_{s=2}^{L}\binom{\alpha _{s}n+\beta _{s}}{r_{s}}%
^{p_{s}}=\frac{1}{\left( r!\right) ^{p}\prod_{s=2}^{L}\left( r_{s}!\right)
^{p_{s}}}\sum_{k=0}^{rp+\sigma }k!S_{a,b,r}^{\alpha _{s},\beta
_{s},r_{s},p_{s}}\left( p,k\right) \binom{n}{k}.  \label{2.16}
\end{equation}

If $k<0$ or $k>rp+\sigma $, we have $S_{a,b,r}^{\alpha _{s},\beta
_{s},r_{s},p_{s}}\left( p,k\right) =0$. For the rest of the work,
\textquotedblleft Stirling number(s)\textquotedblright\ will mean
\textquotedblleft Stirling number(s) of the second kind\textquotedblright .

By using (\ref{1.18}) and according to (\ref{2.16}), we see that the $Z$%
-Transform of the sequence $\binom{an+b}{r}^{p}\prod_{s=2}^{L}\binom{\alpha
_{s}n+\beta _{s}}{r_{s}}^{p_{s}}$ is%
\begin{equation}
\mathcal{Z}\left( \binom{an+b}{r}^{p}\prod_{s=2}^{L}\binom{\alpha
_{s}n+\beta _{s}}{r_{s}}^{p_{s}}\right) =\frac{1}{\left( r!\right)
^{p}\prod_{s=2}^{L}\left( r_{s}!\right) ^{p_{s}}}\sum_{k=0}^{rp+\sigma
}k!S_{a,b,r}^{\alpha _{s},\beta _{s},r_{s},p_{s}}\left( p,k\right) \frac{z}{%
\left( z-1\right) ^{k+1}},  \label{2.17}
\end{equation}%
that we can write as%
\begin{equation}
\mathcal{Z}\left( \binom{an+b}{r}^{p}\prod_{s=2}^{L}\binom{\alpha
_{s}n+\beta _{s}}{r_{s}}^{p_{s}}\right) =\frac{z}{\left( z-1\right)
^{rp+\sigma +1}\left( r!\right) ^{p}\prod_{s=2}^{L}\left( r_{s}!\right)
^{p_{s}}}\sum_{k=0}^{rp+\sigma }k!S_{a,b,r}^{\alpha _{s},\beta
_{s},r_{s},p_{s}}\left( p,k\right) \left( z-1\right) ^{rp+\sigma -k}.
\label{2.18}
\end{equation}

Thus, comparing (\ref{2.18}) with (\ref{2.101}), we obtain the following
expression for the GEP $\mathbb{P}_{a,b,r,p}^{\alpha _{s},\beta
_{s},r_{s},p_{s}}\left( z\right) $ in terms of the GSN $S_{a,b,r}^{\alpha
_{s},\beta _{s},r_{s},p_{s}}\left( p,k\right) $,%
\begin{equation}
\mathbb{P}_{a,b,r,p}^{\alpha _{s},\beta _{s},r_{s},p_{s}}\left( z\right) =%
\frac{1}{\left( r!\right) ^{p}\prod_{s=2}^{L}\left( r_{s}!\right) ^{p_{s}}}%
\sum_{k=0}^{rp+\sigma }k!S_{a,b,r}^{\alpha _{s},\beta
_{s},r_{s},p_{s}}\left( p,k\right) \left( z-1\right) ^{rp+\sigma -k}.
\label{2.19}
\end{equation}

The GEP $\mathbb{P}_{a,b,r,p}^{\alpha _{s},\beta _{s},r_{s},p_{s}}\left(
z\right) $ (\ref{2.2}) written in powers of $z-1$, is%
\begin{equation}
\mathbb{P}_{a,b,r,p}^{\alpha _{s},\beta _{s},r_{s},p_{s}}\left( z\right)
=\sum_{k=0}^{rp+\sigma }\sum_{i=0}^{rp+\sigma }\binom{rp+\sigma -i}{%
rp+\sigma -k}A_{a,b,r}^{\alpha _{s},\beta _{s},r_{s},p_{s}}\left( p,i\right)
\left( z-1\right) ^{rp+\sigma -k}.  \label{2.20}
\end{equation}

Thus, from (\ref{2.19}) and (\ref{2.20}), we see that for $0\leq k\leq
rp+\sigma $ we have%
\begin{equation}
S_{a,b,r}^{\alpha _{s},\beta _{s},r_{s},p_{s}}\left( p,k\right) =\frac{%
\left( r!\right) ^{p}\prod_{s=2}^{L}\left( r_{s}!\right) ^{p_{s}}}{k!}%
\sum_{i=0}^{rp+\sigma }\binom{rp+\sigma -i}{rp+\sigma -k}A_{a,b,r}^{\alpha
_{s},\beta _{s},r_{s},p_{s}}\left( p,i\right) .  \label{2.21}
\end{equation}

Reciprocally, we can write (\ref{2.18}) as%
\begin{eqnarray}
&&\mathcal{Z}\left( \binom{an+b}{r}^{p}\prod_{s=2}^{L}\binom{\alpha
_{s}n+\beta _{s}}{r_{s}}^{p_{s}}\right)  \label{2.22} \\
&=&\frac{z}{\left( z-1\right) ^{rp+\sigma +1}}\sum_{k=0}^{rp+\sigma }\frac{k!%
}{\left( r!\right) ^{p}\prod_{s=2}^{L}\left( r_{s}!\right) ^{p_{s}}}%
S_{a,b,r}^{\alpha _{s},\beta _{s},r_{s},p_{s}}\left( p,k\right)
\sum_{j=0}^{rp+\sigma -k}\binom{rp+\sigma -k}{j}\left( -1\right)
^{j}z^{rp+\sigma -k-j}.  \notag
\end{eqnarray}

In the right-hand side of (\ref{2.22}) introduce the new summation index $%
i=k+j$, to obtain 
\begin{eqnarray}
&&\mathcal{Z}\left( \binom{an+b}{r}^{p}\prod_{s=2}^{L}\binom{\alpha
_{s}n+\beta _{s}}{r_{s}}^{p_{s}}\right)  \label{2.23} \\
&=&\frac{z}{\left( z-1\right) ^{rp+\sigma +1}}\sum_{i=0}^{rp+\sigma }\left( 
\frac{\left( -1\right) ^{i}}{\left( r!\right) ^{p}\prod_{s=2}^{L}\left(
r_{s}!\right) ^{p_{s}}}\sum_{k=0}^{rp+\sigma }\binom{rp+\sigma -k}{rp+\sigma
-i}\left( -1\right) ^{k}k!S_{a,b,r}^{\alpha _{s},\beta
_{s},r_{s},p_{s}}\left( p,k\right) \right) z^{rp+\sigma -i}.  \notag
\end{eqnarray}

Thus, comparing (\ref{2.23}) with (\ref{2.101}) (and (\ref{2.2})), we see
that for $0\leq i\leq rp+\sigma $ we have%
\begin{equation}
A_{a,b,r}^{\alpha _{s},\beta _{s},r_{s},p_{s}}\left( p,i\right) =\frac{%
\left( -1\right) ^{i}}{\left( r!\right) ^{p}\prod_{s=2}^{L}\left(
r_{s}!\right) ^{p_{s}}}\sum_{k=0}^{rp+\sigma }\binom{rp+\sigma -k}{rp+\sigma
-i}\left( -1\right) ^{k}k!S_{a,b,r}^{\alpha _{s},\beta
_{s},r_{s},p_{s}}\left( p,k\right) .  \label{2.24}
\end{equation}

In the standard case $a=1,b=0,r=1$, $r_{s}p_{s}=0$, formulas (\ref{2.21})
and (\ref{2.24}) are well-known results: $S\left( p,k\right) =\frac{1}{k!}%
\sum_{i=0}^{p}\binom{p-i}{p-k}A\left( p,i\right) ,$ and $A\left( p,i\right)
=\left( -1\right) ^{i}\sum_{k=0}^{p}\binom{p-k}{p-i}\left( -1\right)
^{k}k!S\left( p,k\right) .$(See \cite{G-K-P}, p. 269, formulas (6.39) and
(6.40).)

The GSN $S_{a,b,r}^{\alpha _{s},\beta _{s},r_{s},p_{s}}\left( p,k\right) $
have an explicit formula (generalizing the known explicit formula $S\left(
p,k\right) =\frac{1}{k!}\sum_{j=0}^{k}\left( -1\right) ^{j}\binom{k}{j}%
\left( k-j\right) ^{p}$).

\begin{proposition}
\label{Prop2.1}We have the following explicit formula for the GSN $%
S_{a,b,r}^{\alpha _{s},\beta _{s},r_{s},p_{s}}\left( p,k\right) $%
\begin{equation}
S_{a,b,r}^{\alpha _{s},\beta _{s},r_{s},p_{s}}\left( p,k\right) =\frac{%
\left( r!\right) ^{p}\prod_{s=2}^{l}\left( r_{s}!\right) ^{p_{s}}}{k!}%
\sum_{j=0}^{k}\left( -1\right) ^{j}\binom{k}{j}\binom{\left( k-j\right) a+b}{%
r}^{p}\prod_{s=2}^{L}\binom{\left( k-j\right) \alpha _{s}+\beta _{s}}{r_{s}}%
^{p_{s}}.  \label{2.25}
\end{equation}
\end{proposition}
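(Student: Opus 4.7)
The plan is to extract the coefficients in the expansion (\ref{2.16}) directly by using the inverse binomial transform (equivalently, Newton's forward-difference formula applied at $n=0$). The identity (\ref{2.16}) is a polynomial identity in $n$, so it holds for every non-negative integer value of $n$; in particular, evaluating it at $n=0,1,\ldots,k$ will suffice to isolate the coefficient of $\binom{n}{k}$.

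Concretely, I would set
\begin{equation*}
f(n)=\binom{an+b}{r}^{p}\prod_{s=2}^{L}\binom{\alpha_{s}n+\beta_{s}}{r_{s}}^{p_{s}},\qquad c_{k}=\frac{k!}{(r!)^{p}\prod_{s=2}^{L}(r_{s}!)^{p_{s}}}\,S_{a,b,r}^{\alpha_{s},\beta_{s},r_{s},p_{s}}(p,k),
\end{equation*}
so (\ref{2.16}) reads $f(n)=\sum_{k=0}^{rp+\sigma}c_{k}\binom{n}{k}$. The key algebraic fact I would invoke is the standard orthogonality $\sum_{j=0}^{k}(-1)^{k-j}\binom{k}{j}\binom{j}{m}=[k=m]$, which follows from the Vandermonde-type rewriting $\binom{k}{j}\binom{j}{m}=\binom{k}{m}\binom{k-m}{j-m}$ and the fact that $\sum_{i=0}^{k-m}(-1)^{k-m-i}\binom{k-m}{i}=0^{k-m}$. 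Applying $\sum_{j=0}^{k}(-1)^{k-j}\binom{k}{j}$ to both sides of $f(j)=\sum_{m}c_{m}\binom{j}{m}$ then yields the inverse binomial transform
\begin{equation*}
c_{k}=\sum_{j=0}^{k}(-1)^{k-j}\binom{k}{j}f(j).
\end{equation*}

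Substituting the explicit $f(j)$ and $c_{k}$, I obtain
\begin{equation*}
\frac{k!}{(r!)^{p}\prod_{s=2}^{L}(r_{s}!)^{p_{s}}}\,S_{a,b,r}^{\alpha_{s},\beta_{s},r_{s},p_{s}}(p,k)=\sum_{j=0}^{k}(-1)^{k-j}\binom{k}{j}\binom{aj+b}{r}^{p}\prod_{s=2}^{L}\binom{\alpha_{s}j+\beta_{s}}{r_{s}}^{p_{s}}.
\end{equation*}
A final reindexing $j\mapsto k-j$ (using $\binom{k}{k-j}=\binom{k}{j}$ and flipping the sign via $(-1)^{k-(k-j)}=(-1)^{j}$) converts the right-hand side into the form appearing in (\ref{2.25}), and solving for $S_{a,b,r}^{\alpha_{s},\beta_{s},r_{s},p_{s}}(p,k)$ completes the proof.

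There is no real obstacle: both sides of (\ref{2.16}) are polynomials of degree $\leq rp+\sigma$, so the passage from the symbolic identity to point evaluations is automatic, and the inverse binomial transform does the rest. The only point requiring any care is to observe that the sum truncates at $j=k$ (even though formally the expansion runs up to $rp+\sigma$), which is guaranteed by the fact that $\binom{j}{m}=0$ for $j<m$; this ensures that no contributions from $c_m$ with $m>k$ appear.
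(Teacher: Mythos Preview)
Your proof is correct, and it is genuinely different from the paper's argument. The paper does not work directly from the defining expansion (\ref{2.16}); instead it goes through the generalized Eulerian numbers. Specifically, the paper starts from formula (\ref{2.21}), which expresses $S_{a,b,r}^{\alpha_s,\beta_s,r_s,p_s}(p,k)$ as a weighted sum of the $A_{a,b,r}^{\alpha_s,\beta_s,r_s,p_s}(p,i)$, substitutes the explicit formula (\ref{2.4}) for the latter, and then collapses the resulting double sum by recognizing a Vandermonde convolution $\sum_{i}\binom{k-rp-\sigma-1}{i}\binom{rp+\sigma+1}{j-i}=\binom{k}{j}$. Your route bypasses the Eulerian layer entirely: you apply the inverse binomial transform (Newton's forward-difference formula) directly to (\ref{2.16}), using only the elementary orthogonality $\sum_{j}(-1)^{k-j}\binom{k}{j}\binom{j}{m}=[k=m]$. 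Your approach is shorter and more self-contained for this particular statement; the paper's approach, on the other hand, is consistent with its broader programme of relating generalized Stirling and Eulerian numbers, so the detour through (\ref{2.21}) and (\ref{2.4}) reinforces that connection even though it is not strictly needed here.
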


\begin{proof}
From (\ref{2.21}) and (\ref{2.25}), we have to show that%
\begin{equation}
\sum_{i=0}^{k}\binom{rp+\sigma -i}{k-i}A_{a,b,r}^{\alpha _{s},\beta
_{s},r_{s},p_{s}}\left( p,i\right) =\sum_{j=0}^{k}\left( -1\right) ^{j}%
\binom{k}{j}\binom{\left( k-j\right) a+b}{r}^{p}\prod_{s=2}^{L}\binom{\left(
k-j\right) \alpha _{s}+\beta _{s}}{r_{s}}^{p_{s}}.  \label{2.26}
\end{equation}

Beginning with the left-hand side of (\ref{2.26}), we use the explicit
formula (\ref{2.4}) for $A_{a,b,r}^{\left( \alpha _{s},\beta
_{s},r_{s},p_{s}\right) }$, to write%
\begin{eqnarray}
&&\sum_{i=0}^{k}\binom{rp+\sigma -i}{k-i}A_{a,b,r}^{\left( \alpha _{s},\beta
_{s},r_{s},p_{s}\right) }\left( p,i\right)  \notag \\
&=&\sum_{i=0}^{k}\binom{rp+\sigma -i}{k-i}\sum_{j=0}^{k}\left( -1\right)
^{i+j}\binom{rp+\sigma +1}{i-j}\binom{aj+b}{r}^{p}\prod_{s=2}^{L}\binom{%
\alpha _{s}j+\beta _{s}}{r_{s}}^{p_{s}}  \notag \\
&=&\sum_{j=0}^{k}\left( -1\right) ^{j}\sum_{i=0}^{k}\left( -1\right) ^{k+i}%
\binom{rp+\sigma -i}{k-i}\binom{rp+\sigma +1}{i-k+j}\binom{a\left(
k-j\right) +b}{r}^{p}\prod_{s=2}^{L}\binom{\alpha _{s}\left( k-j\right)
+\beta _{s}}{r_{s}}^{p_{s}}.  \label{2.27}
\end{eqnarray}

Now, observe that for $0\leq j\leq k$, we have%
\begin{eqnarray*}
\sum_{i=0}^{k}\left( -1\right) ^{k+i}\binom{rp+\sigma -i}{k-i}\binom{%
rp+\sigma +1}{i-k+j} &=&\sum_{i=0}^{k}\binom{-rp-\sigma +i+k-i-1}{k-i}\binom{%
rp+\sigma +1}{i-k+j} \\
&=&\sum_{i=0}^{j}\binom{k-rp-\sigma -1}{i}\binom{rp+\sigma +1}{j-i} \\
&=&\binom{k}{j},
\end{eqnarray*}%
where the last step is the Vandermonde Convolution. Thus, expression (\ref%
{2.27}) (the left-hand side of (\ref{2.26})) is precisely the right-hand
side of (\ref{2.26}), as desired.
\end{proof}

From (\ref{2.25}) one can easily see that, for $p$ given, the first two
values $S_{a,b,r}^{\alpha _{s},\beta _{s},r_{s},p_{s}}\left( p,0\right) $
and $S_{a,b,r}^{\alpha _{s},\beta _{s},r_{s},p_{s}}\left( p,1\right) $ are 
\begin{equation}
S_{a,b,r}^{\alpha _{s},\beta _{s},r_{s},p_{s}}\left( p,0\right) =\left( r!%
\binom{b}{r}\right) ^{p}\prod\limits_{s=2}^{L}\left( r_{s}!\binom{\beta _{s}%
}{r_{s}}\right) ^{p_{s}},  \label{2.29}
\end{equation}

\begin{equation}
S_{a,b,r}^{\alpha _{s},\beta _{s},r_{s},p_{s}}\left( p,1\right) =\left( r!%
\binom{a+b}{r}\right) ^{p}\prod_{s=2}^{L}\left( r_{s}!\binom{\alpha
_{s}+\beta _{s}}{r_{s}}\right) ^{p_{s}}-S_{a,b,r}^{\alpha _{s},\beta
_{s},r_{s},p_{s}}\left( p,0\right) ,  \label{2.291}
\end{equation}%
and the last value $S_{a,b,r}^{\alpha _{s},\beta _{s},r_{s},p_{s}}\left(
p,rp+\sigma \right) $ is 
\begin{equation}
S_{a,b,r}^{\alpha _{s},\beta _{s},r_{s},p_{s}}\left( p,rp+\sigma \right)
=a^{rp}\prod_{s=2}^{L}\alpha _{s}{}^{r_{s}p_{s}}.  \label{2.30}
\end{equation}

\section{\label{Sec3}Case $r=r_{s}=1$: Main Results}

In this section we consider the GSN $S_{a,b,1}^{\alpha _{s},\beta
_{s},1,p_{s}}\left( p,k\right) $, $k=0,1,\ldots ,p+\sigma $, $\sigma
=p_{2}+\cdots +p_{L}$, involved in the expansion $\left( an+b\right)
^{p}\prod\limits_{s=2}^{L}\left( \alpha _{s}n+\beta _{s}\right)
^{p_{s}}=\sum_{k=0}^{p+\sigma }k!S_{a,b,1}^{\alpha _{s},\beta
_{s},1,p_{s}}\left( p,k\right) \!\binom{n}{k}$. For the sake of a simpler
notation, we will work with the GSN $S_{a_{1},b_{1},1}^{a_{2},b_{2},1,p_{2}}%
\left( p,k\right) $, that we write as $S_{a_{1},b_{1}}^{a_{2},b_{2},p_{2}}%
\left( p,k\right) $. According to (\ref{2.25}) we have the explicit formula 
\begin{equation}
S_{a_{1},b_{1}}^{a_{2},b_{2},p_{2}}\left( p_{1},k\right) =\frac{1}{k!}%
\sum_{j=0}^{k}\left( -1\right) ^{j}\binom{k}{j}\left( a_{1}\left( k-j\right)
+b_{1}\right) ^{p_{1}}\left( a_{2}\left( k-j\right) +b_{2}\right) ^{p_{2}}.
\label{3.1}
\end{equation}

(The case $p_{2}=0$ was studied before by L. Verde-Star \cite{V-S}, and it
is particular case of the Hsu and Shiue generalization \cite{H-S1}.) We
present some Generalized Stirling Number Triangles (GSNT, for short), where
the GSN $S_{a_{1},b_{1}}^{a_{2},b_{2},p_{2}}\left( p,k\right) $ appear in a
triangular array ($p\in \mathbb{N}$ stands for lines and $k$ stands for
columns, $0\leq k\leq p+p_{2}$).

\begin{equation*}
\begin{array}{lll}
\begin{tabular}{|c|c|c|c|c|c|}
\hline
\multicolumn{6}{|c|}{\textbf{GSNT1: }$S_{1,1}^{1,0,1}\left( p,k\right) .$}
\\ \hline
$p\diagdown k$ & $0$ & $1$ & $2$ & $3$ & $\cdots $ \\ \hline
$0$ & $0$ & $1$ &  &  &  \\ \hline
$1$ & $0$ & $2$ & $1$ &  & $\cdots $ \\ \hline
$2$ & $0$ & $4$ & $5$ & $1$ &  \\ \hline
$\vdots $ &  & $\vdots $ &  & $\vdots $ &  \\ \hline
\end{tabular}
&  & 
\begin{tabular}{|c|c|c|c|c|cc|}
\hline
\multicolumn{6}{|c}{\textbf{GSNT2: }$S_{1,1}^{1,0,2}\left( p,k\right) .$} & 
\\ \hline
$p\diagdown k$ & $0$ & $1$ & $2$ & $3$ & $4$ & \multicolumn{1}{|c|}{$\cdots $%
} \\ \hline
$0$ & $0$ & $1$ & $1$ &  &  & \multicolumn{1}{|c|}{} \\ \hline
$1$ & $0$ & $2$ & $4$ & $1$ &  & \multicolumn{1}{|c|}{$\cdots $} \\ \hline
$2$ & $0$ & $4$ & $14$ & $8$ & $1$ & \multicolumn{1}{|c|}{} \\ \hline
$\vdots $ &  & $\vdots $ &  & $\vdots $ &  & \multicolumn{1}{|c|}{} \\ \hline
\end{tabular}%
\end{array}%
\end{equation*}

\begin{equation*}
\begin{array}{lll}
\begin{tabular}{|c|c|c|c|cc|}
\hline
\multicolumn{5}{|c}{\textbf{GSNT3: }$S_{1,2}^{1,1,1}\left( p,k\right) .$} & 
\\ \hline
$p\diagdown k$ & $0$ & $1$ & $2$ & $3$ & \multicolumn{1}{|c|}{$\cdots $} \\ 
\hline
$0$ & $1$ & $1$ &  &  & \multicolumn{1}{|c|}{} \\ \hline
$1$ & $2$ & $4$ & $1$ &  & \multicolumn{1}{|c|}{$\cdots $} \\ \hline
$2$ & $4$ & $14$ & $8$ & $1$ & \multicolumn{1}{|c|}{} \\ \hline
$\vdots $ &  & $\vdots $ &  & $\vdots $ & \multicolumn{1}{|c|}{} \\ \hline
\end{tabular}
&  & 
\begin{tabular}{|c|c|c|c|c|cc|}
\hline
\multicolumn{6}{|c}{\textbf{GSNT4: }$S_{1,2}^{1,1,2}\left( p,k\right) .$} & 
\\ \hline
$p\diagdown k$ & $0$ & $1$ & $2$ & $3$ & $4$ & \multicolumn{1}{|c|}{$\cdots $%
} \\ \hline
$0$ & $1$ & $3$ & $1$ &  &  & \multicolumn{1}{|c|}{} \\ \hline
$1$ & $2$ & $10$ & $7$ & $1$ &  & \multicolumn{1}{|c|}{$\cdots $} \\ \hline
$2$ & $4$ & $32$ & $38$ & $12$ & $1$ & \multicolumn{1}{|c|}{} \\ \hline
$\vdots $ &  & $\vdots $ &  & $\vdots $ &  & \multicolumn{1}{|c|}{} \\ \hline
\end{tabular}%
\end{array}%
\end{equation*}

The following facts are obvious from (\ref{3.1}):%
\begin{equation}
\begin{array}{lll}
S_{a_{1},b_{1}}^{a_{1},b_{1},p_{2}}\left( p_{1},k\right)
=S_{a_{1},b_{1}}\left( p_{1}+p_{2},k\right) , &  & 
S_{a_{1},b_{1}}^{a_{2},b_{2},p_{2}}\left( 0,0\right) =b_{2}^{p_{2}}, \\ 
S_{a_{1},b_{1}}^{a_{2},b_{2},p_{2}}\left( p_{1},k\right)
=S_{a_{2},b_{2}}^{a_{1},b_{1},p_{1}}\left( p_{2},k\right) , &  & 
S_{a_{1},b_{1}}^{a_{2},b_{2},p_{2}}\left( 0,k\right) =S_{a_{2},b_{2}}\left(
p_{2},k\right) .%
\end{array}
\label{3.2}
\end{equation}

Also, one can see easily from (\ref{3.1}) that 
\begin{eqnarray}
S_{1,1}\left( p,k\right) &=&S\left( p+1,k+1\right) ,  \label{3.3} \\
S_{1,2}\left( p,k\right) &=&S\left( p+2,k+2\right) -S\left( p+1,k+2\right) .
\notag
\end{eqnarray}

(We will be using (\ref{3.2}) and (\ref{3.3}) without further comments.)

Some values of the GSN $S_{a_{1},b_{1}}^{a_{2},b_{2},p_{2}}\left(
p_{1},k\right) $ are%
\begin{eqnarray}
S_{a_{1},b_{1}}^{a_{2},b_{2},p_{2}}\left( p_{1},0\right)
&=&\!\!b_{1}^{p_{1}}\!b_{2}^{p_{2}}\!,  \label{3.4} \\
S_{a_{1},b_{1}}^{a_{2},b_{2},p_{2}}\left( p_{1},1\right) &=&\left(
a_{1}+b_{1}\right) ^{p_{1}}\!\left( a_{2}+b_{2}\right)
^{p_{2}}\!-b_{1}^{p_{1}}\!b_{2}^{p_{2}}\!,  \notag \\
S_{a_{1},b_{1}}^{a_{2},b_{2},p_{2}}\left( p_{1},2\right) &=&\!\!\frac{1}{2}%
\left( 2a_{1}+b_{1}\right) ^{p_{1}}\!\left( 2a_{2}+b_{2}\right)
^{p_{2}}\!-\left( a_{1}+b_{1}\right) ^{p_{1}}\!\left( a_{2}+b_{2}\right)
^{p_{2}}\!+\frac{1}{2}b_{1}^{p_{1}}\!b_{2}^{p_{2}}\!,  \notag \\
&&\vdots  \notag \\
S_{a_{1},b_{1}}^{a_{2},b_{2},p_{2}}\left( p_{1},p_{1}+p_{2}\right)
&=&a_{1}^{p_{1}}a_{2}^{p_{2}}.  \notag
\end{eqnarray}

The GSN $S_{a_{1},b_{1}}^{a_{2},b_{2},p_{2}}\left( p_{1},k\right) $ satisfy
the recurrence (to be proved in \cite{Pi3}) 
\begin{equation}
S_{a_{1},b_{1}}^{a_{2},b_{2},p_{2}}\left( p_{1},k\right)
=a_{1}S_{a_{1},b_{1}}^{a_{2},b_{2},p_{2}}\left( p_{1}-1,k-1\right) +\left(
a_{1}k+b_{1}\right) S_{a_{1},b_{1}}^{a_{2},b_{2},p_{2}}\left(
p_{1}-1,k\right) .  \label{3.5}
\end{equation}

\begin{lemma}
\label{Lemma1}The GSN $S_{a_{1},b_{1}}^{a_{2},b_{2},p_{2}}\left(
p_{1},k\right) $ are related to the GSN $S_{c_{1},d_{1}}^{c_{2},d_{2},j_{2}}%
\left( p_{1},k\right) $, by the following formula%
\begin{equation}
S_{a_{1},b_{1}}^{a_{2},b_{2},p_{2}}\left( p_{1},k\right)
=c_{1}^{-p_{1}}c_{2}^{-p_{2}}\sum_{j_{1}=0}^{p_{1}}\sum_{j_{2}=0}^{p_{2}}%
\binom{p_{1}}{j_{1}}\binom{p_{2}}{j_{2}}a_{1}^{j_{1}}a_{2}^{j_{2}}\left(
b_{1}c_{1}-a_{1}d_{1}\right) ^{p_{1}-j_{1}}\left(
b_{2}c_{2}-a_{2}d_{2}\right)
^{p_{2}-j_{2}}S_{c_{1},d_{1}}^{c_{2},d_{2},j_{2}}\left( j_{1},k\right) .
\label{3.6}
\end{equation}
\end{lemma}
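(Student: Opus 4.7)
The plan is to start from the explicit formula (\ref{3.1}) for $S_{a_{1},b_{1}}^{a_{2},b_{2},p_{2}}(p_{1},k)$ and rewrite each linear factor $a_{i}(k-j)+b_{i}$ as an affine combination of the shifted linear factor $c_{i}(k-j)+d_{i}$. Specifically, I would use the identity
\begin{equation*}
a_{i}m+b_{i}=\tfrac{a_{i}}{c_{i}}\bigl(c_{i}m+d_{i}\bigr)+\tfrac{b_{i}c_{i}-a_{i}d_{i}}{c_{i}},
\end{equation*}
with $m=k-j$, which is valid whenever $c_{i}\neq 0$ (a hypothesis tacitly needed so that the negative powers $c_{i}^{-p_{i}}$ make sense).

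Next, I would raise both identities to the $p_{1}$-th and $p_{2}$-th powers and expand by the binomial theorem, obtaining
\begin{equation*}
(a_{1}m+b_{1})^{p_{1}}(a_{2}m+b_{2})^{p_{2}}=\frac{1}{c_{1}^{p_{1}}c_{2}^{p_{2}}}\sum_{j_{1},j_{2}}\binom{p_{1}}{j_{1}}\binom{p_{2}}{j_{2}}a_{1}^{j_{1}}a_{2}^{j_{2}}(b_{1}c_{1}-a_{1}d_{1})^{p_{1}-j_{1}}(b_{2}c_{2}-a_{2}d_{2})^{p_{2}-j_{2}}(c_{1}m+d_{1})^{j_{1}}(c_{2}m+d_{2})^{j_{2}}.
\end{equation*}
I would then substitute this expansion into (\ref{3.1}), pull the factors that do not depend on $j$ outside of the alternating sum over $j$, and swap the order of summation so that the inner sum becomes
\begin{equation*}
\frac{1}{k!}\sum_{j=0}^{k}(-1)^{j}\binom{k}{j}\bigl(c_{1}(k-j)+d_{1}\bigr)^{j_{1}}\bigl(c_{2}(k-j)+d_{2}\bigr)^{j_{2}}.
\end{equation*}

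Finally, I would recognize this inner sum as $S_{c_{1},d_{1}}^{c_{2},d_{2},j_{2}}(j_{1},k)$, again by formula (\ref{3.1}) (with $p_{1}$ replaced by $j_{1}$ and $p_{2}$ by $j_{2}$). Collecting all factors yields exactly (\ref{3.6}). There is no real obstacle here; the proof is essentially bookkeeping. The only subtle point is making sure the double binomial expansion is carried out cleanly so that the indices $j_{1},j_{2}$ in the outer sum match the upper parameters of the GSN on the right side, and that the factor $S_{c_{1},d_{1}}^{c_{2},d_{2},j_{2}}(j_{1},k)$ is well-defined for $j_{1}+j_{2}<k$ (in which case it vanishes, contributing nothing and posing no problem).
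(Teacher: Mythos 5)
Your proof is correct and is essentially the paper's own argument run in the opposite direction: the paper starts from the right-hand side of (\ref{3.6}), inserts the explicit formula (\ref{3.1}), swaps the order of summation, and collapses the double binomial sum using $\sum_{j_i}\binom{p_i}{j_i}(b_ic_i-a_id_i)^{p_i-j_i}\left(a_ic_i(k-l)+a_id_i\right)^{j_i}=c_i^{p_i}\left(a_i(k-l)+b_i\right)^{p_i}$, which is precisely the reverse of your expansion of $a_im+b_i$ as an affine combination of $c_im+d_i$. The same implicit hypothesis $c_1,c_2\neq 0$ appears in both versions.
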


\begin{proof}
It is a straightforward calculation:%
\begin{eqnarray*}
&&c_{1}^{-p_{1}}c_{2}^{-p_{2}}\sum_{j_{1}=0}^{p_{1}}\sum_{j_{2}=0}^{p_{2}}%
\binom{p_{1}}{j_{1}}\binom{p_{2}}{j_{2}}a_{1}^{j_{1}}a_{2}^{j_{2}}\left(
b_{1}c_{1}-a_{1}d_{1}\right) ^{p_{1}-j_{1}}\left(
b_{2}c_{2}-a_{2}d_{2}\right)
^{p_{2}-j_{2}}S_{c_{1},d_{1}}^{c_{2},d_{2},j_{2}}\left( j_{1},k\right) \\
&=&c_{1}^{-p_{1}}c_{2}^{-p_{2}}\sum_{j_{1}=0}^{p_{1}}\sum_{j_{2}=0}^{p_{2}}%
\binom{p_{1}}{j_{1}}\binom{p_{2}}{j_{2}}a_{1}^{j_{1}}a_{2}^{j_{2}}\left(
b_{1}c_{1}-a_{1}d_{1}\right) ^{p_{1}-j_{1}}\left(
b_{2}c_{2}-a_{2}d_{2}\right) ^{p_{2}-j_{2}}\times \\
&&\times \frac{1}{k!}\sum_{l=0}^{k}\left( -1\right) ^{l}\binom{k}{l}\left(
c_{1}\left( k-l\right) +d_{1}\right) ^{j_{1}}\left( c_{2}\left( k-l\right)
+d_{2}\right) ^{j_{2}} \\
&=&\frac{1}{k!}\sum_{l=0}^{k}\left( -1\right) ^{l}\binom{k}{l}%
c_{1}^{-p_{1}}\sum_{j_{1}=0}^{p_{1}}\binom{p_{1}}{j_{1}}\left(
b_{1}c_{1}-a_{1}d_{1}\right) ^{p_{1}-j_{1}}\left( a_{1}c_{1}\left(
k-l\right) +a_{1}d_{1}\right) ^{j_{1}} \\
&&\times c_{2}^{-p_{2}}\sum_{j_{2}=0}^{p_{2}}\binom{p_{2}}{j_{2}}\left(
b_{2}c_{2}-a_{2}d_{2}\right) ^{p_{2}-j_{2}}\left( a_{2}c_{2}\left(
k-l\right) +a_{2}d_{2}\right) ^{j_{2}} \\
&=&\frac{1}{k!}\sum_{l=0}^{k}\left( -1\right) ^{l}\binom{k}{l}\left(
a_{1}\left( k-l\right) +b_{1}\right) ^{p_{1}}\left( a_{2}\left( k-l\right)
+b_{2}\right) ^{p_{2}} \\
&=&S_{a_{1},b_{1}}^{a_{2},b_{2},p_{2}}\left( p_{1},k\right) ,
\end{eqnarray*}%
as desired.
\end{proof}

In particular, from (\ref{3.6}) (with $c_{1}=c_{2}=1,d_{1}=d_{2}=0$) we see
that the GSN $S_{a_{1},b_{1}}^{a_{1},b_{1},p_{2}}\left( p_{1},k\right) $ can
be written in terms of standard Stirling numbers as%
\begin{equation}
S_{a_{1},b_{1}}^{a_{2},b_{2},p_{2}}\left( p_{1},k\right)
=\sum_{j_{1}=0}^{p_{1}}\sum_{j_{2}=0}^{p_{2}}\binom{p_{1}}{j_{1}}\binom{p_{2}%
}{j_{2}}a_{1}^{j_{1}}a_{2}^{j_{2}}b_{1}^{p_{1}-j_{1}}b_{2}^{p_{2}-j_{2}}S%
\left( j_{1}+j_{2},k\right) .  \label{3.7}
\end{equation}

If $a_{1}=a_{2},b_{1}=b_{2}$ (or if $p_{2}=0$), expression (\ref{3.7})
reduces to%
\begin{equation}
S_{a,b}\left( p,k\right) =\sum_{j=0}^{p}\binom{p}{j}a^{j}b^{p-j}S\left(
j,k\right) .  \label{3.71}
\end{equation}

(The particular case $a=b=1$ of (\ref{3.71}) is the known formula $S\left(
p+1,k+1\right) =\sum_{j=0}^{p}\binom{p}{j}S\left( j,k\right) .$) Similarly,
the standard Stirling numbers can be written in terms of GSN (from (\ref{3.6}%
) with $a_{1}=a_{2}=1,b_{1}=b_{2}=0$) as%
\begin{equation}
S\left( p_{1}+p_{2},k\right)
=c_{1}^{-p_{1}}c_{2}^{-p_{2}}\sum_{j_{1}=0}^{p_{1}}\sum_{j_{2}=0}^{p_{2}}%
\binom{p_{1}}{j_{1}}\binom{p_{2}}{j_{2}}\left( -d_{1}\right)
^{p_{1}-j_{1}}\left( -d_{2}\right)
^{p_{2}-j_{2}}S_{c_{1},d_{1}}^{c_{2},d_{2},j_{2}}\left( j_{1},k\right) .
\label{3.8}
\end{equation}

If $c_{1}=c_{2},d_{1}=d_{2}$ (or if $p_{2}=0$), expression (\ref{3.8})
reduces to%
\begin{equation}
S\left( p,k\right) =c^{-p}\sum_{j=0}^{p}\binom{p}{j}\left( -d\right)
^{p-j}S_{c,d}\left( j,k\right) .  \label{3.81}
\end{equation}

(The particular case $c=d=1$ of (\ref{3.81}) is the known formula $S\left(
p,k\right) =\sum_{j=0}^{p}\binom{p}{j}\left( -1\right) ^{p-j}S\left(
j+1,k+1\right) .$)

\begin{lemma}
\label{Lemma2}We have%
\begin{equation}
S_{a_{1},a_{1}+b_{1}}^{a_{2},a_{2}+b_{2},p_{2}}\left( p_{1},k\right)
=S_{a_{1},b_{1}}^{a_{2},b_{2},p_{2}}\left( p_{1},k\right) +\left( k+1\right)
S_{a_{1},b_{1}}^{a_{2},b_{2},p_{2}}\left( p_{1},k+1\right) .  \label{3.9}
\end{equation}
\end{lemma}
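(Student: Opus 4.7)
The plan is to apply the explicit formula (\ref{3.1}) to both sides of (\ref{3.9}) and reduce them to the same alternating sum by elementary index manipulation and Pascal's rule. Since (\ref{3.1}) is a closed-form formula of the same shape in both $a_{1},b_{1}$ and $a_{2},b_{2}$, the identity should reduce to a purely combinatorial statement.

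First I would expand the left-hand side of (\ref{3.9}) using (\ref{3.1}) with $b_{1}$ replaced by $a_{1}+b_{1}$ and $b_{2}$ replaced by $a_{2}+b_{2}$, so that the generic summand contains $(a_{1}(k-j+1)+b_{1})^{p_{1}}(a_{2}(k-j+1)+b_{2})^{p_{2}}$. I would then substitute $l=k-j+1$ (so $l$ runs from $1$ to $k+1$), rewriting the LHS as
\begin{equation*}
\frac{1}{k!}\sum_{l=1}^{k+1}(-1)^{k+1-l}\binom{k}{l-1}(a_{1}l+b_{1})^{p_{1}}(a_{2}l+b_{2})^{p_{2}}.
\end{equation*}

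Next I would expand the right-hand side of (\ref{3.9}) using (\ref{3.1}) twice, noting that the coefficient $(k+1)$ of the second term cancels the factor $1/(k+1)!$ down to $1/k!$, so both terms share the common prefactor $1/k!$. In each of the two resulting inner sums I would perform the change of variable $j\mapsto l$ with $l=k-j$ (first sum) and $l=k+1-j$ (second sum), landing in both cases on summands proportional to $(a_{1}l+b_{1})^{p_{1}}(a_{2}l+b_{2})^{p_{2}}$. Since the signs differ by exactly one factor of $-1$, combining them yields
\begin{equation*}
\frac{1}{k!}\sum_{l=0}^{k+1}(-1)^{k+1-l}\Bigl(\tbinom{k+1}{l}-\tbinom{k}{l}\Bigr)(a_{1}l+b_{1})^{p_{1}}(a_{2}l+b_{2})^{p_{2}}.
\end{equation*}

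Finally, Pascal's rule $\binom{k+1}{l}-\binom{k}{l}=\binom{k}{l-1}$ (with the convention $\binom{k}{-1}=0$ killing the $l=0$ term) identifies this last expression with the form obtained for the LHS, completing the proof. The only mild obstacle is bookkeeping with the shift of summation indices and making sure the sign $(-1)^{k+1-l}$ agrees in both computations; no deeper combinatorial identity is needed beyond Pascal's rule.
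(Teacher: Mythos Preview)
Your proof is correct and follows essentially the same strategy as the paper's: expand both sides via the explicit formula (\ref{3.1}) and match them using Pascal's rule. The one difference is that for the left-hand side the paper first passes through Lemma~\ref{Lemma1} (formula (\ref{3.6})) and then re-collapses the resulting double sum with the binomial theorem to reach the expression $\frac{1}{k!}\sum_{t=0}^{k}(-1)^{t}\binom{k}{t}(a_{1}(k+1-t)+b_{1})^{p_{1}}(a_{2}(k+1-t)+b_{2})^{p_{2}}$, whereas you obtain this directly by substituting $b_{i}\mapsto a_{i}+b_{i}$ in (\ref{3.1}); your route is shorter and avoids the dependence on Lemma~\ref{Lemma1}, but the underlying mechanism is identical.
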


\begin{proof}
By using (\ref{3.6}) we can write the GSN $%
S_{a_{1},a_{1}+b_{1}}^{a_{2},a_{2}+b_{2},p_{2}}\left( p_{1},k\right) $
(left-hand side of (\ref{3.9})) as%
\begin{equation*}
S_{a_{1},a_{1}+b_{1}}^{a_{2},a_{2}+b_{2},p_{2}}\left( p_{1},k\right)
=\sum_{j_{1}=0}^{p_{1}}\sum_{j_{2}=0}^{p_{2}}\binom{p_{1}}{j_{1}}\binom{p_{2}%
}{j_{2}}%
a_{1}^{p_{1}-j_{1}}a_{2}^{p_{2}-j_{2}}S_{a_{1},b_{1}}^{a_{2},b_{2},j_{2}}%
\left( j_{1},k\right) ,
\end{equation*}%
that is%
\begin{eqnarray}
&&S_{a_{1},a_{1}+b_{1}}^{a_{2},a_{2}+b_{2},p_{2}}\left( p_{1},k\right) 
\notag \\
&=&\sum_{j_{1}=0}^{p_{1}}\sum_{j_{2}=0}^{p_{2}}\binom{p_{1}}{j_{1}}\binom{%
p_{2}}{j_{2}}a_{1}^{p_{1}-j_{1}}a_{2}^{p_{2}-j_{2}}\frac{1}{k!}%
\sum_{t=0}^{k}\left( -1\right) ^{t}\binom{k}{t}\left( a_{1}\left( k-t\right)
+b_{1}\right) ^{j_{1}}\left( a_{2}\left( k-t\right) +b_{2}\right) ^{j_{2}} 
\notag \\
&=&\frac{1}{k!}\sum_{t=0}^{k}\left( -1\right) ^{t}\binom{k}{t}\left(
a_{1}\left( k+1-t\right) +b_{1}\right) ^{p_{1}}\left( a_{2}\left(
k+1-t\right) +b_{2}\right) ^{p_{2}}.  \label{3.10}
\end{eqnarray}

On the other hand, the right-hand side of (\ref{3.9}) is%
\begin{eqnarray*}
&&S_{a_{1},b_{1}}^{a_{2},b_{2},p_{2}}\left( p_{1},k\right) +\left(
k+1\right) S_{a_{1},b_{1}}^{a_{2},b_{2},p_{2}}\left( p_{1},k+1\right) \\
&=&\frac{1}{k!}\sum_{t=0}^{k}\left( -1\right) ^{t}\binom{k}{t}\left(
a_{1}\left( k-t\right) +b_{1}\right) ^{p_{1}}\left( a_{2}\left( k-t\right)
+b_{2}\right) ^{p_{2}} \\
&&+\left( k+1\right) \frac{1}{\left( k+1\right) !}\sum_{t=0}^{k+1}\left(
-1\right) ^{t}\binom{k+1}{t}\left( a_{1}\left( k+1-t\right) +b_{1}\right)
^{p_{1}}\left( a_{2}\left( k+1-t\right) +b_{2}\right) ^{p_{2}} \\
&=&\frac{1}{k!}\sum_{t=0}^{k}\left( -1\right) ^{t}\binom{k}{t}\left(
a_{1}\left( k-t\right) +b_{1}\right) ^{p_{1}}\left( a_{2}\left( k-t\right)
+b_{2}\right) ^{p_{2}} \\
&&+\frac{1}{k!}\sum_{t=0}^{k+1}\left( -1\right) ^{t}\left( \binom{k}{t}+%
\binom{k}{t-1}\right) \left( a_{1}\left( k+1-t\right) +b_{1}\right)
^{p_{1}}\left( a_{2}\left( k+1-t\right) +b_{2}\right) ^{p_{2}} \\
&=&\frac{1}{k!}\sum_{t=0}^{k}\left( \left( -1\right) ^{t}+\left( -1\right)
^{t+1}\right) \binom{k}{t}\left( a_{1}\left( k-t\right) +b_{1}\right)
^{p_{1}}\left( a_{2}\left( k-t\right) +b_{2}\right) ^{p_{2}} \\
&&+\frac{1}{k!}\sum_{t=0}^{k}\left( -1\right) ^{t}\binom{k}{t}\left(
a_{1}\left( k+1-t\right) +b_{1}\right) ^{p_{1}}\left( a_{2}\left(
k+1-t\right) +b_{2}\right) ^{p_{2}} \\
&=&\frac{1}{k!}\sum_{t=0}^{k}\left( -1\right) ^{t}\binom{k}{t}\left(
a_{1}\left( k+1-t\right) +b_{1}\right) ^{p_{1}}\left( a_{2}\left(
k+1-t\right) +b_{2}\right) ^{p_{2}},
\end{eqnarray*}%
which is equal to (\ref{3.10}), as desired.
\end{proof}

(The case $p_{2}=0,a_{1}=1,b_{1}=0$ of (\ref{3.9}) is the known recurrence
for standard Stirling numbers.)

Formula (\ref{3.6}) is just a particular case ($m=0$) of the following more
general result.

\begin{proposition}
\label{Prop1}For $0\leq k\leq p_{1}+p_{2}$, and any non-negative integer $m$
we have%
\begin{eqnarray}
&&k!S_{a_{1},b_{1}}^{a_{2},b_{2},p_{2}}\left( p_{1},k\right)  \label{3.11} \\
&=&c_{1}^{-p_{1}}c_{2}^{-p_{2}}\sum_{j_{1}=0}^{p_{1}}\sum_{j_{2}=0}^{p_{2}}%
\binom{p_{1}}{j_{1}}\binom{p_{2}}{j_{2}}a_{1}^{j_{1}}a_{2}^{j_{2}}\left(
b_{1}c_{1}-a_{1}c_{1}m-a_{1}d_{1}\right) ^{p_{1}-j_{1}}\left(
b_{2}c_{2}-a_{2}c_{2}m-a_{2}d_{2}\right) ^{p_{2}-j_{2}}\times  \notag \\
&&\times \sum_{t=0}^{m}\binom{m}{t}\left( k+t\right)
!S_{c_{1},d_{1}}^{c_{2},d_{2},j_{2}}\left( j_{1},k+t\right) .  \notag
\end{eqnarray}
\end{proposition}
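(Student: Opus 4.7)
The plan is to mimic the proof of Lemma~\ref{Lemma1} at one level deeper: I would substitute the explicit formula (\ref{3.1}) into the inner Stirling numbers on the right-hand side of (\ref{3.11}), collapse the $j_{1}$ and $j_{2}$ sums by the binomial theorem, and then handle the extra $t$-sum with a small combinatorial identity. Lemma~\ref{Lemma1} is recovered immediately at $m=0$, where the $t$-sum reduces to a single term.

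First, I would replace $(k+t)!\,S_{c_{1},d_{1}}^{c_{2},d_{2},j_{2}}(j_{1},k+t)$ by its explicit formula
\[
\sum_{l=0}^{k+t}(-1)^{l}\binom{k+t}{l}(c_{1}(k+t-l)+d_{1})^{j_{1}}(c_{2}(k+t-l)+d_{2})^{j_{2}}.
\]
For each fixed pair $(t,l)$ the inner sums over $j_{1}$ and $j_{2}$ collapse by the binomial theorem in exactly the way carried out in Lemma~\ref{Lemma1}, the only change being that $b_{i}c_{i}-a_{i}d_{i}$ is replaced by $b_{i}c_{i}-a_{i}c_{i}m-a_{i}d_{i}$; the net outcome is a factor $(a_{1}(k+t-l-m)+b_{1})^{p_{1}}(a_{2}(k+t-l-m)+b_{2})^{p_{2}}$. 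The right-hand side of (\ref{3.11}) has then been reduced to
\[
\sum_{t=0}^{m}\binom{m}{t}\sum_{l=0}^{k+t}(-1)^{l}\binom{k+t}{l}(a_{1}(k+t-l-m)+b_{1})^{p_{1}}(a_{2}(k+t-l-m)+b_{2})^{p_{2}}.
\]

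Next I would set $s=k+t-l-m$ and swap the $t$ and $s$ summations, so that the dependence on $(a_{i},b_{i},p_{i})$ is concentrated in the factor $(a_{1}s+b_{1})^{p_{1}}(a_{2}s+b_{2})^{p_{2}}$ and the remaining inner sum becomes
\[
T(s)=\sum_{t=0}^{m}(-1)^{t}\binom{m}{t}\binom{k+t}{m+s}.
\]
The combinatorial heart of the proof is the identity $T(s)=(-1)^{m}\binom{k}{s}$, which I would establish by expanding $\binom{k+t}{m+s}=\sum_{j}\binom{t}{j}\binom{k}{m+s-j}$ via Vandermonde's convolution and then applying the orthogonality relation $\sum_{t=0}^{m}(-1)^{t}\binom{m}{t}\binom{t}{j}=(-1)^{m}\delta_{j,m}$.

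Plugging $T(s)=(-1)^{m}\binom{k}{s}$ back in automatically kills all $s$ outside $[0,k]$ and leaves $\sum_{s=0}^{k}(-1)^{k-s}\binom{k}{s}(a_{1}s+b_{1})^{p_{1}}(a_{2}s+b_{2})^{p_{2}}$, which is precisely $k!\,S_{a_{1},b_{1}}^{a_{2},b_{2},p_{2}}(p_{1},k)$ by (\ref{3.1}) after the substitution $j=k-s$. The main obstacle I anticipate is the index bookkeeping in the reindexing step: the $l$-sum runs up to $k+t$ rather than $k$, so after the change of variable the effective range of $s$ depends on $t$, and some care is needed when swapping the $t$ and $s$ summations so that no boundary contribution is lost. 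Once this is handled, the Vandermonde-type identity for $T(s)$ is routine and the rest of the argument is the same binomial collapse already used in Lemma~\ref{Lemma1}.
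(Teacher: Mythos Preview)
Your argument is correct, and the index-bookkeeping worry you flag is harmless: since $\binom{k+t}{m+s}$ vanishes whenever $m+s<0$ or $m+s>k+t$, you may freely extend the $s$-sum to all integers before swapping with the $t$-sum, and the identity $T(s)=(-1)^{m}\binom{k}{s}$ then restricts the surviving range to $0\le s\le k$ automatically.

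Your route, however, differs from the paper's. The paper carries out the same first reduction you do (arriving at your double sum in $t$ and $l$, which is its display~(\ref{3.12})), but then proves~(\ref{3.12}) by \emph{induction on $m$}: it splits $\binom{m+1}{t}=\binom{m}{t}+\binom{m}{t-1}$, applies the induction hypothesis with $b_{i}$ replaced by $b_{i}-a_{i}$ once at level $k$ and once at level $k+1$, and closes the loop with Lemma~\ref{Lemma2} (the shift identity~(\ref{3.9})). Your approach is a direct one-shot computation via the Vandermonde/orthogonality identity $\sum_{t}(-1)^{t}\binom{m}{t}\binom{t}{j}=(-1)^{m}\delta_{j,m}$, so it is more elementary in that it does not invoke Lemma~\ref{Lemma2} at all. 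What the paper's inductive argument buys is a structural explanation---Proposition~\ref{Prop1} is seen as an $m$-fold iterate of the shift relation~(\ref{3.9})---whereas your argument trades that insight for a shorter self-contained proof that needs only the explicit formula~(\ref{3.1}) and a standard binomial identity.
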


\begin{proof}
If $k=p_{1}+p_{2}$, the left-hand side of (\ref{3.11}) is $\left(
p_{1}+p_{2}\right) !a_{1}^{p_{1}}a_{2}^{p_{2}}$, and the right-hand side of (%
\ref{3.11}) reduces to the only term $%
c_{1}^{-p_{1}}c_{2}^{-p_{2}}a_{1}^{p_{1}}a_{2}^{p_{2}}\left(
p_{1}+p_{2}\right) !c_{1}^{p_{1}}c_{2}^{p_{2}}=\left( p_{1}+p_{2}\right)
!a_{1}^{p_{1}}a_{2}^{p_{2}}$, so we can suppose that $0\leq k<p_{1}+p_{2}$.
Observe that the right-hand side of (\ref{3.11}) can be written as%
\begin{eqnarray*}
&&c_{1}^{-p_{1}}c_{2}^{-p_{2}}\sum_{j_{1}=0}^{p_{1}}\sum_{j_{2}=0}^{p_{2}}%
\binom{p_{1}}{j_{1}}\binom{p_{2}}{j_{2}}a_{1}^{j_{1}}a_{2}^{j_{2}}\left(
b_{1}c_{1}-a_{1}c_{1}m-a_{1}d_{1}\right) ^{p_{1}-j_{1}}\left(
b_{2}c_{2}-a_{2}c_{2}m-a_{2}d_{2}\right) ^{p_{2}-j_{2}}\times \\
&&\times \sum_{t=0}^{m}\binom{m}{t}\sum_{l=0}^{k+t}\left( -1\right) ^{l}%
\binom{k+t}{l}\left( c_{1}\left( k+t-l\right) +d_{1}\right) ^{j_{1}}\left(
c_{2}\left( k+t-l\right) +d_{2}\right) ^{j_{2}} \\
&=&\sum_{t=0}^{m}\binom{m}{t}\sum_{l=0}^{k+t}\left( -1\right) ^{l}\binom{k+t%
}{l}\left( a_{1}\left( k+t-l\right) +b_{1}-a_{1}m\right) ^{p_{1}}\left(
a_{2}\left( k+t-l\right) +b_{2}-a_{2}m\right) ^{p_{2}}.
\end{eqnarray*}

That is, we have to show that for $0\leq k<p_{1}+p_{2}$, and any
non-negative integer $m$, one has%
\begin{eqnarray}
&&\sum_{t=0}^{m}\binom{m}{t}\sum_{l=0}^{k+t}\left( -1\right) ^{l}\binom{k+t}{%
l}\left( a_{1}\left( k+t-l\right) +b_{1}-a_{1}m\right) ^{p_{1}}\left(
a_{2}\left( k+t-l\right) +b_{2}-a_{2}m\right) ^{p_{2}}  \notag \\
&=&k!S_{a_{1},b_{1}}^{a_{2},b_{2},p_{2}}\left( p_{1},k\right) .  \label{3.12}
\end{eqnarray}

We proceed by induction on $m$. For $m=0$ formula (\ref{3.12}) is trivial.
If we suppose (\ref{3.12}) is true for a given $m\in \mathbb{N}$, then%
\begin{eqnarray*}
&&\sum_{t=0}^{m+1}\binom{m+1}{t}\sum_{l=0}^{k+t}\left( -1\right) ^{l}\binom{%
k+t}{l}\times \\
&&\times \left( a_{1}\left( k+t-l\right) +b_{1}-a_{1}\left( m+1\right)
\right) ^{p_{1}}\left( a_{2}\left( k+t-l\right) +b_{2}-a_{2}\left(
m+1\right) \right) ^{p_{2}} \\
&=&\sum_{t=0}^{m}\binom{m}{t}\sum_{l=0}^{k+t}\left( -1\right) ^{l}\binom{k+t%
}{l}\times \\
&&\times \left( a_{1}\left( k+t-l\right) +b_{1}-a_{1}\left( m+1\right)
\right) ^{p_{1}}\left( a_{2}\left( k+t-l\right) +b_{2}-a_{2}\left(
m+1\right) \right) ^{p_{2}} \\
&&+\sum_{t=0}^{m}\binom{m}{t}\sum_{l=0}^{k+t+1}\left( -1\right) ^{l}\binom{%
k+t+1}{l}\times \\
&&\times \left( a_{1}\left( k+t+1-l\right) +b_{1}-a_{1}\left( m+1\right)
\right) ^{p_{1}}\left( a_{2}\left( k+t+1-l\right) +b_{2}-a_{2}\left(
m+1\right) \right) ^{p_{2}} \\
&=&k!S_{a_{1},b_{1}-a_{1}}^{a_{2},b_{2}-a_{2},p_{2}}\left( p_{1},k\right)
+\left( k+1\right) !S_{a_{1},b_{1}-a_{1}}^{a_{2},b_{2}-a_{2},p_{2}}\left(
p_{1},k+1\right) \\
&=&k!\left( S_{a_{1},b_{1}-a_{1}}^{a_{2},b_{2}-a_{2},p_{2}}\left(
p_{1},k\right) +\left( k+1\right)
S_{a_{1},b_{1}-a_{1}}^{a_{2},b_{2}-a_{2},p_{2}}\left( p_{1},k+1\right)
\right) \\
&=&k!S_{a_{1},b_{1}}^{a_{2},b_{2},p_{2}}\left( p_{1},k\right) ,
\end{eqnarray*}%
as desired (in the last step we used (\ref{3.9})).
\end{proof}

In particular, expression (\ref{3.11}) gives us the following infinite
family of formulas for the GSN $S_{a_{1},b_{1}}^{a_{2},b_{2},p_{2}}\left(
p_{1},k\right) $ in terms of standard Stirling numbers (where $m$ is any
non-negative integer)%
\begin{eqnarray}
&&k!S_{a_{1},b_{1}}^{a_{2},b_{2},p_{2}}\left( p_{1},k\right)  \label{3.13} \\
&=&\sum_{j_{1}=0}^{p_{1}}\sum_{j_{2}=0}^{p_{2}}\binom{p_{1}}{j_{1}}\binom{%
p_{2}}{j_{2}}a_{1}^{j_{1}}a_{2}^{j_{2}}\left( b_{1}-a_{1}m\right)
^{p_{1}-j_{1}}\left( b_{2}-a_{2}m\right) ^{p_{2}-j_{2}}\times  \notag \\
&&\times \sum_{t=0}^{m}\binom{m}{t}\left( k+t\right) !S\left(
j_{1}+j_{2},k+t\right) .  \notag
\end{eqnarray}

We will see now that, for $m>0$, there is a nicer form to write (\ref{3.13}).

\begin{lemma}
\label{Lemma3}For non-negative integers $j,k$, $0\leq k\leq j\leq p$, and
any positive integer $m$, we have the identity%
\begin{equation}
\sum_{t=0}^{m}\binom{m}{t}\left( k+t\right) !S\left( j,k+t\right)
=k!\sum_{t=0}^{m-1}\left( -1\right) ^{t}s\left( m,m-t\right) S\left(
j+m-t,k+m\right) ,  \label{3.14}
\end{equation}%
where $s\left( \cdot ,\cdot \right) $ are the Stirling numbers of the first
kind.
\end{lemma}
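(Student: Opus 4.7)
The plan is to verify the identity by comparing the exponential generating functions (EGFs) in $j$ of both sides; both will turn out to equal $(e^{x}-1)^{k}e^{mx}$.

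For the EGF of the LHS, the standard identity $\sum_{j\geq 0}S(j,k+t)x^{j}/j!=(e^{x}-1)^{k+t}/(k+t)!$ immediately gives
\[
\sum_{j\geq 0}\Bigl[\sum_{t=0}^{m}\binom{m}{t}(k+t)!\,S(j,k+t)\Bigr]\frac{x^{j}}{j!} = \sum_{t=0}^{m}\binom{m}{t}(e^{x}-1)^{k+t} = (e^{x}-1)^{k}\bigl(1+(e^{x}-1)\bigr)^{m} = (e^{x}-1)^{k}e^{mx}.
\]

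For the EGF of the RHS, differentiating $(e^{x}-1)^{k+m}/(k+m)! = \sum_{n}S(n,k+m)x^{n}/n!$ a total of $v$ times and shifting the summation index yields $\sum_{j\geq 0}S(j+v,k+m)x^{j}/j! = D^{v}(e^{x}-1)^{k+m}/(k+m)!$, where $D=d/dx$. Reindexing the RHS by $v=m-t$ and invoking the signed-Stirling-of-the-first-kind identity $\sum_{v=0}^{m}s(m,v)z^{v}=z(z-1)(z-2)\cdots(z-m+1)$ (with $s(m,0)=0$ for $m\geq 1$, so the $v=0$ term can be appended harmlessly) recasts the EGF of the RHS as
\[
\frac{k!}{(k+m)!}\Bigl(\sum_{v=0}^{m}s(m,v)D^{v}\Bigr)(e^{x}-1)^{k+m} = \frac{k!}{(k+m)!}\,D(D-1)(D-2)\cdots(D-m+1)(e^{x}-1)^{k+m}.
\]

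To evaluate this, I expand $(e^{x}-1)^{k+m}=\sum_{l=0}^{k+m}(-1)^{l}\binom{k+m}{l}e^{(k+m-l)x}$ and use that each factor $(D-i)$ acts on $e^{\lambda x}$ as multiplication by $(\lambda-i)$: the operator multiplies $e^{(k+m-l)x}$ by $(k+m-l)(k+m-l-1)\cdots(k-l+1)$, and this product vanishes for $l\in\{k+1,\ldots,k+m\}$ because one of its $m$ consecutive integer factors hits zero. So only $l=0,\ldots,k$ survive; for these, $\binom{k+m}{l}(k+m-l)(k+m-l-1)\cdots(k-l+1)=(k+m)!/(l!(k-l)!)=\frac{(k+m)!}{k!}\binom{k}{l}$, and pulling $e^{mx}$ out of each $e^{(k+m-l)x}$ leaves the binomial expansion of $(e^{x}-1)^{k}$:
\[
D(D-1)\cdots(D-m+1)(e^{x}-1)^{k+m} = \frac{(k+m)!}{k!}\,e^{mx}\sum_{l=0}^{k}(-1)^{l}\binom{k}{l}e^{(k-l)x} = \frac{(k+m)!}{k!}(e^{x}-1)^{k}e^{mx}.
\]
Substituting back, the EGF of the RHS equals $(e^{x}-1)^{k}e^{mx}$, matching the LHS, and the identity follows by comparing coefficients of $x^{j}/j!$. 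The main obstacle is the middle paragraph: reconciling the paper's signed factor $(-1)^{t}s(m,m-t)$ with the signed-Stirling convention so that the operator identity $\sum_{v}s(m,v)D^{v}=D(D-1)\cdots(D-m+1)$ applies cleanly; once the RHS is rewritten in this operator form, the remaining evaluation is a satisfying (but routine) collapse driven by the vanishing falling factorials for $l>k$.
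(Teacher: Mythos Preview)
Your proof is correct and takes a genuinely different route from the paper. The paper proceeds by induction on $m$: it splits $\binom{m+1}{t}=\binom{m}{t}+\binom{m}{t-1}$, applies the induction hypothesis twice (once with $k$ and once with $k+1$), then uses the second-kind recurrence $S(p,k)=S(p-1,k-1)+kS(p-1,k)$ and finally the first-kind recurrence $s(p,k)=s(p-1,k-1)+(p-1)s(p-1,k)$ to recombine. Your argument instead identifies a common exponential generating function $(e^{x}-1)^{k}e^{mx}$ for both sides: the LHS collapses by the binomial theorem, and the RHS collapses once you recognize that the paper's $s(m,\cdot)$ is the \emph{unsigned} first-kind Stirling number (its recurrence has the $+(p-1)$ sign), so that $(-1)^{t}s(m,m-t)$ is exactly the signed Stirling number and $\sum_{v}(-1)^{m-v}s(m,v)D^{v}=D(D-1)\cdots(D-m+1)$ applies. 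Your computation of this falling-factorial operator on $(e^{x}-1)^{k+m}$ via the binomial expansion and the vanishing of the descending product for $l>k$ is clean and correct. The EGF approach is shorter and more conceptual, explaining \emph{why} the identity holds (both sides count the same thing via $(e^{x}-1)^{k}e^{mx}$), whereas the paper's inductive proof is more elementary and self-contained, relying only on the two Stirling recurrences without any generating-function machinery.
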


\begin{proof}
We will use the recurrence for the Stirling numbers of the first kind $%
s\left( p,k\right) =s\left( p-1,k-1\right) +\left( p-1\right) s\left(
p-1,k\right) $, and the recurrence for the Stirling numbers of the second
kind $S\left( p,k\right) =S\left( p-1,k-1\right) +kS\left( p-1,k\right) $.
We proceed by induction on $m$: for $m=1$ one can see easily that both sides
of (\ref{3.14}) are equal to $k!S\left( j+1,k+1\right) $. Let us assume that
(\ref{3.14}) is true for a given $m\in \mathbb{N}$. Thus, we begin our
argument with $\sum_{t=0}^{m+1}\binom{m+1}{t}\left( k+t\right) !S\left(
j,k+t\right) $. First write $\binom{m+1}{t}=\binom{m}{t}+\binom{m}{t-1}$.
After some easy algebraic work, we use the induction hypothesis to get%
\begin{eqnarray*}
&&\sum_{t=0}^{m+1}\binom{m+1}{t}\left( k+t\right) !S\left( j,k+t\right) \\
&=&\sum_{t=0}^{m}\binom{m}{t}\left( k+t\right) !S\left( j,k+t\right)
+\sum_{t=0}^{m}\binom{m}{t}\left( k+t+1\right) !S\left( j,k+t+1\right) \\
&=&k!\sum_{t=0}^{m-1}\left( -1\right) ^{t}s\left( m,m-t\right) \left(
S\left( j+m-t,k+m\right) +\left( k+1\right) S\left( j+m-t,k+1+m\right)
\right) .
\end{eqnarray*}%
Now use the recurrence for the Stirling numbers of the second kind, then
again some elementary algebraic steps, and then use the recurrence for the
Stirling numbers of the first kind, to get%
\begin{eqnarray*}
&&\sum_{t=0}^{m+1}\binom{m+1}{t}\left( k+t\right) !S\left( j,k+t\right) \\
&=&k!\sum_{t=0}^{m-1}\left( -1\right) ^{t}s\left( m,m-t\right) \left(
S\left( j+m+1-t,k+m+1\right) -mS\left( j+m-t,k+m+1\right) \right) \\
&=&k!\sum_{t=0}^{m-1}\left( -1\right) ^{t}s\left( m,m-t\right) S\left(
j+m+1-t,k+m+1\right) \\
&&-k!\sum_{t=1}^{m}\left( -1\right) ^{t-1}ms\left( m,m+1-t\right) S\left(
j+m+1-t,k+m+1\right) \\
&=&k!\sum_{t=0}^{m}\left( -1\right) ^{t}\left( s\left( m,m-t\right)
+ms\left( m,m+1-t\right) \right) S\left( j+m+1-t,k+m+1\right) \\
&=&k!\sum_{t=0}^{m}\left( -1\right) ^{t}s\left( m+1,m+1-t\right) S\left(
j+m+1-t,k+m+1\right) ,
\end{eqnarray*}%
which is the desired conclusion.
\end{proof}

\begin{corollary}
\label{Cor1}The GSN $S_{a_{1},b_{1}}^{a_{2},b_{2},p_{2}}\left(
p_{1},k\right) $ can be written in terms of standard Stirling numbers as%
\begin{eqnarray}
S_{a_{1},b_{1}}^{a_{2},b_{2},p_{2}}\left( p_{1},k\right)
&=&\sum_{j_{1}=0}^{p_{1}}\sum_{j_{2}=0}^{p_{2}}\binom{p_{1}}{j_{1}}\binom{%
p_{2}}{j_{2}}a_{1}^{j_{1}}a_{2}^{j_{2}}\left( b_{1}-a_{1}m\right)
^{p_{1}-j_{1}}\left( b_{2}-a_{2}m\right) ^{p_{2}-j_{2}}\times  \notag \\
&&\times \sum_{t=0}^{m-1}\left( -1\right) ^{t}s\left( m,m-t\right) S\left(
j_{1}+j_{2}+m-t,k+m\right) ,  \label{3.15}
\end{eqnarray}%
where $m$ is any positive integer.
\end{corollary}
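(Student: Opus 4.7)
The plan is essentially a one-step substitution: formula (\ref{3.13}) already expresses $k!S_{a_{1},b_{1}}^{a_{2},b_{2},p_{2}}(p_{1},k)$ as a double sum over $j_{1},j_{2}$ whose inner summand is precisely $\sum_{t=0}^{m}\binom{m}{t}(k+t)!\,S(j_{1}+j_{2},k+t)$. Lemma \ref{Lemma3} was tailored to rewrite exactly this quantity in terms of ordinary Stirling numbers of both kinds, as $k!\sum_{t=0}^{m-1}(-1)^{t}s(m,m-t)\,S(j_{1}+j_{2}+m-t,k+m)$. The strategy is therefore to fix a positive integer $m$, start from (\ref{3.13}), apply Lemma \ref{Lemma3} with $j=j_{1}+j_{2}$ to every inner sum, pull the common factor $k!$ outside, and finally cancel it against the $k!$ appearing on the left-hand side. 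What remains is precisely (\ref{3.15}).

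First I would write out (\ref{3.13}) verbatim for the chosen $m\geq 1$, so that the sum $\sum_{t=0}^{m}\binom{m}{t}(k+t)!\,S(j_{1}+j_{2},k+t)$ is visible in isolation. Next I would invoke Lemma \ref{Lemma3} inside the double sum over $j_{1},j_{2}$, producing $k!\sum_{t=0}^{m-1}(-1)^{t}s(m,m-t)\,S(j_{1}+j_{2}+m-t,k+m)$ in place of the inner $t$-sum. Since $k!$ is independent of $j_{1}$ and $j_{2}$, it factors out and cancels the $k!$ on the left-hand side of (\ref{3.13}), leaving exactly the right-hand side of (\ref{3.15}).

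A small technicality to address is that Lemma \ref{Lemma3} is stated for $0\leq k\leq j$, whereas in our double sum the index $j=j_{1}+j_{2}$ can be smaller than $k$. However, in that range $k+t>j$ forces $S(j_{1}+j_{2},k+t)=0$ for every $t\geq 0$, and likewise $k+m>j_{1}+j_{2}+m-t$ forces $S(j_{1}+j_{2}+m-t,k+m)=0$ for every $t\in\{0,\ldots,m-1\}$; hence both sides of (\ref{3.14}) vanish and the substitution remains valid term-by-term. With this observation, the derivation is purely formal and presents no obstacle: the genuine work was already done in establishing Lemma \ref{Lemma3}, whose double induction encapsulates the combinatorial identity relating the two kinds of Stirling numbers.
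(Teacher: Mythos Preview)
Your proposal is correct and follows exactly the paper's approach: the paper's proof consists of the single sentence ``Formula (\ref{3.15}) comes directly from (\ref{3.13}) and (\ref{3.14}),'' which is precisely the substitution you describe. Your handling of the technicality that Lemma~\ref{Lemma3} is only stated for $0\leq k\leq j$ is a welcome addition, since the paper glosses over this point entirely.
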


\begin{proof}
Formula (\ref{3.15}) comes directly from (\ref{3.13}) and (\ref{3.14}).
\end{proof}

In particular, if $b_{1}$ is a positive integer, we have from (\ref{3.15})
with $p_{2}=0,a_{1}=1$ and $m=b_{1}$ that%
\begin{equation}
S_{1,m}\left( p,k\right) =\sum_{t=0}^{m-1}\left( -1\right) ^{t}s\left(
m,m-t\right) S\left( p+m-t,k+m\right) .  \label{3.151}
\end{equation}

For $m=1,2$, formula (\ref{3.151}) gives (\ref{3.3}). For $m=3,4$ we have 
\begin{eqnarray}
S_{1,3}\left( p,k\right) &=&S\left( p+3,k+3\right) -3S\left( p+2,k+3\right)
+2S\left( p+1,k+3\right) ,  \label{3.152} \\
S_{1,4}\left( p,k\right) &=&S\left( p+4,k+4\right) -6S\left( p+3,k+4\right)
+11S\left( p+2,k+4\right) -6S\left( p+1,k+4\right) .  \notag
\end{eqnarray}

An interesting consequence of (\ref{3.151}) is the following.

\begin{corollary}
We have the following recurrence for Stirling numbers 
\begin{equation}
S\left( p_{1}+p_{2},l\right) =\sum_{k=1}^{p_{2}-1}\left( -1\right)
^{p_{2}+1+k}s\left( p_{2},k\right) S\left( p_{1}+k,l\right)
+\sum_{j=0}^{p_{1}}\binom{p_{1}}{j}p_{2}^{p_{1}-j}S\left( j,l-p_{2}\right) .
\label{3.153}
\end{equation}
\end{corollary}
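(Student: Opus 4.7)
The plan is to realize that the sequence $S_{1,p_{2}}(p_{1},l-p_{2})$ can be expressed in two different ways using results already established, and then to equate these two expressions and rearrange.

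First, I would apply formula (\ref{3.71}) with $a=1$, $b=p_{2}$, $p=p_{1}$, $k=l-p_{2}$, which gives the ``expansion'' side:
\begin{equation*}
S_{1,p_{2}}(p_{1},l-p_{2}) \;=\; \sum_{j=0}^{p_{1}}\binom{p_{1}}{j}\,p_{2}^{p_{1}-j}\,S(j,l-p_{2}).
\end{equation*}
Next, I would apply the identity (\ref{3.151}) with $m=p_{2}$, $p=p_{1}$, and $k=l-p_{2}$ (so that $k+m=l$), which yields the ``Stirling-of-the-first-kind'' side:
\begin{equation*}
S_{1,p_{2}}(p_{1},l-p_{2}) \;=\; \sum_{t=0}^{p_{2}-1}(-1)^{t}\,s(p_{2},p_{2}-t)\,S(p_{1}+p_{2}-t,l).
\end{equation*}

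The next step is routine bookkeeping: equate the two right-hand sides, and in the second sum reindex by $k=p_{2}-t$, so that $t$ running from $0$ to $p_{2}-1$ corresponds to $k$ running from $p_{2}$ down to $1$. The $k=p_{2}$ term (i.e.\ $t=0$) contributes $(-1)^{0}s(p_{2},p_{2})S(p_{1}+p_{2},l)=S(p_{1}+p_{2},l)$, which I would split off from the sum. Moving this isolated term to the other side gives
\begin{equation*}
S(p_{1}+p_{2},l) \;=\; -\sum_{k=1}^{p_{2}-1}(-1)^{p_{2}-k}s(p_{2},k)\,S(p_{1}+k,l)\;+\;\sum_{j=0}^{p_{1}}\binom{p_{1}}{j}p_{2}^{p_{1}-j}\,S(j,l-p_{2}).
\end{equation*}

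Finally I would observe that $-(-1)^{p_{2}-k}=(-1)^{p_{2}+1-k}=(-1)^{p_{2}+1+k}$ (because $(-1)^{-k}=(-1)^{k}$ for integer $k$), which is exactly the sign appearing in (\ref{3.153}). There is really no ``main obstacle'' here: the only subtle point is the sign manipulation and making sure the reindexing of the first-kind Stirling sum extracts precisely the single term $S(p_{1}+p_{2},l)$ that is to be isolated; all the content of the identity is already packed into the two preceding formulas (\ref{3.71}) and (\ref{3.151}).
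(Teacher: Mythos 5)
Your proposal is correct and follows essentially the same route as the paper: the paper likewise equates the two expressions for $S_{1,p_{2}}(p_{1},\cdot)$ coming from (\ref{3.71}) and (\ref{3.151}), the only cosmetic difference being that the paper performs the shift $l\mapsto l-p_{2}$ at the end rather than at the outset. Your sign bookkeeping and the extraction of the $t=0$ (i.e.\ $k=p_{2}$) term are accurate.
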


\begin{proof}
From (\ref{3.71}) and (\ref{3.151}) we have that%
\begin{equation}
S_{1,p_{2}}\left( p_{1},l\right) =\sum_{j=0}^{p_{1}}\binom{p_{1}}{j}%
p_{2}^{p_{1}-j}S\left( j,l\right) =\sum_{t=1}^{p_{2}}\left( -1\right)
^{p_{2}+t}s\left( p_{2},t\right) S\left( p_{1}+t,l+p_{2}\right) .
\label{3.154}
\end{equation}%
Substitute $l$ by $l-p_{2}$ to obtain from (\ref{3.154}) the desired
conclusion (\ref{3.153}).
\end{proof}

For example, if $p_{2}=2,3$, we have%
\begin{eqnarray}
S\left( p_{1}+2,l\right) &=&S\left( p_{1}+1,l\right) +\sum_{j=0}^{p_{1}}%
\binom{p_{1}}{j}2^{p_{1}-j}S\left( j,l-2\right)  \notag \\
S\left( p_{1}+3,l\right) &=&-2S\left( p_{1}+1,l\right) +3S\left(
p_{1}+2,l\right) +\sum_{j=0}^{p_{1}}\binom{p_{1}}{j}3^{p_{1}-j}S\left(
j,l-3\right) .  \notag
\end{eqnarray}

For $p_{1}$ given and $l=p_{2}$, (\ref{3.153}) gives us formulas expressing
Stirling numbers of the second kind in terms of Stirling numbers of the
first kind. For example, for $p_{1}=1,2$ we have%
\begin{equation*}
S\left( p_{2}+1,p_{2}\right) =s\left( p_{2},p_{2}-1\right) +p_{2}.
\end{equation*}%
\begin{equation*}
S\left( p_{2}+2,p_{2}\right) =\left( s\left( p_{2},p_{2}-1\right) \right)
^{2}+p_{2}s\left( p_{2},p_{2}-1\right) -s\left( p_{2},p_{2}-2\right)
+p_{2}^{2}.
\end{equation*}

The main ideas involved in (\ref{3.7}), (\ref{3.13}) and (\ref{3.15}) can be
summarized as follows: we can write the GSN $%
S_{a_{1},b_{1}}^{a_{2},b_{2},p_{2}}\left( p_{1},k\right) $ in terms of
standard Stirling numbers by using (\ref{3.7}) (which comes from (\ref{3.13}%
) with $m=0$), but also for $m>0$ we can use (\ref{3.15}). For example, with 
$m=0,1,2$ we have 
\begin{eqnarray}
&&S_{a_{1},b_{1}}^{a_{2},b_{2},p_{2}}\left( p_{1},k\right)  \label{3.16} \\
&=&\sum_{j_{1}=0}^{p_{1}}\sum_{j_{2}=0}^{p_{2}}\binom{p_{1}}{j_{1}}\binom{%
p_{2}}{j_{2}}%
a_{1}^{j_{1}}a_{2}^{j_{2}}b_{1}^{p_{1}-j_{1}}b_{2}^{p_{2}-j_{2}}S\left(
j_{1}+j_{2},k\right)  \notag \\
&=&\sum_{j_{1}=0}^{p_{1}}\sum_{j_{2}=0}^{p_{2}}\binom{p_{1}}{j_{1}}\binom{%
p_{2}}{j_{2}}a_{1}^{j_{1}}a_{2}^{j_{2}}\left( b_{1}-a_{1}\right)
^{p_{1}-j_{1}}\left( b_{2}-a_{2}\right) ^{p_{2}-j_{2}}S\left(
j_{1}+j_{2}+1,k+1\right)  \notag \\
&=&\sum_{j_{1}=0}^{p_{1}}\sum_{j_{2}=0}^{p_{2}}\binom{p_{1}}{j_{1}}\binom{%
p_{2}}{j_{2}}a_{1}^{j_{1}}a_{2}^{j_{2}}\left( b_{1}-2a_{1}\right)
^{p_{1}-j_{1}}\left( b_{2}-2a_{2}\right) ^{p_{2}-j_{2}}\times  \notag \\
&&\times \left( S\left( j_{1}+j_{2}+2,k+2\right) -S\left(
j_{1}+j_{2}+1,k+2\right) \right) .  \notag
\end{eqnarray}

Some concrete examples of (\ref{3.16}) are the following,

\begin{eqnarray}
&&S_{1,1}^{1,0,p_{2}}\left( p_{1},k\right)  \label{3.17} \\
&=&\sum_{j_{1}=0}^{p_{1}}\binom{p_{1}}{j_{1}}S\left( j_{1}+p_{2},k\right) 
\notag \\
&=&\sum_{j_{2}=0}^{p_{2}}\binom{p_{2}}{j_{2}}\left( -1\right)
^{p_{2}-j_{2}}S\left( p_{1}+j_{2}+1,k+1\right)  \notag \\
&=&\sum_{j_{1}=0}^{p_{1}}\sum_{j_{2}=0}^{p_{2}}\binom{p_{1}}{j_{1}}\binom{%
p_{2}}{j_{2}}\left( -1\right) ^{p_{1}-j_{1}}\left( -2\right)
^{p_{2}-j_{2}}\left( S\left( j_{1}+j_{2}+2,k+2\right) -S\left(
j_{1}+j_{2}+1,k+2\right) \right) .  \notag
\end{eqnarray}

\begin{eqnarray}
&&S_{1,2}^{1,0,p_{2}}\left( p_{1},k\right)  \label{3.18} \\
&=&\sum_{j_{1}=0}^{p_{1}}\binom{p_{1}}{j_{1}}2^{p_{1}-j_{1}}S\left(
j_{1}+p_{2},k\right)  \notag \\
&=&\sum_{j_{1}=0}^{p_{1}}\sum_{j_{2}=0}^{p_{2}}\binom{p_{1}}{j_{1}}\binom{%
p_{2}}{j_{2}}\left( -1\right) ^{p_{2}-j_{2}}S\left( j_{1}+j_{2}+1,k+1\right)
\notag \\
&=&\sum_{j_{2}=0}^{p_{2}}\binom{p_{2}}{j_{2}}\left( -2\right)
^{p_{2}-j_{2}}\left( S\left( p_{1}+j_{2}+2,k+2\right) -S\left(
p_{1}+j_{2}+1,k+2\right) \right) .  \notag
\end{eqnarray}

\begin{eqnarray}
&&S_{1,1}^{1,2,p_{2}}\left( p_{1},k\right)  \label{3.19} \\
&=&\sum_{j_{1}=0}^{p_{1}}\sum_{j_{2}=0}^{p_{2}}\binom{p_{1}}{j_{1}}\binom{%
p_{2}}{j_{2}}2^{p_{2}-j_{2}}S\left( j_{1}+j_{2},k\right)  \notag \\
&=&\sum_{j_{2}=0}^{p_{2}}\binom{p_{2}}{j_{2}}S\left( p_{1}+j_{2}+1,k+1\right)
\notag \\
&=&\sum_{j_{1}=0}^{p_{1}}\binom{p_{1}}{j_{1}}\left( -1\right)
^{p_{1}-j_{1}}\left( S\left( j_{1}+p_{2}+2,k+2\right) -S\left(
j_{1}+p_{2}+1,k+2\right) \right) .  \notag
\end{eqnarray}

(In passing: from (\ref{3.19}) we see that $S_{1,1}^{1,2,p}\left( 1,k\right)
=\sum_{j=0}^{p}\binom{p}{j}S\left( j+2,k+1\right) $, and from (\ref{3.17})
we see that $S_{1,1}^{1,0,2}\left( p,k\right) =\sum_{j=0}^{p}\binom{p}{j}%
S\left( j+2,k\right) $. So we have $S_{1,2}^{1,1,1}\left( p,k\right)
=S_{1,1}^{1,0,2}\left( p,k+1\right) $ (see GSNT2 and GSNT3).)

Formula (\ref{3.6}) gives us the following relations involving the GSN (\ref%
{3.17}), (\ref{3.18}), (\ref{3.19}):%
\begin{eqnarray}
S_{1,1}^{1,0,p_{2}}\left( p_{1},k\right) &=&\sum_{j_{1}=0}^{p_{1}}\binom{%
p_{1}}{j_{1}}\left( -1\right) ^{p_{1}-j_{1}}S_{1,2}^{1,0,p_{2}}\left(
j_{1},k\right)  \label{3.20} \\
&=&\sum_{j_{2}=0}^{p_{2}}\binom{p_{2}}{j_{2}}\left( -2\right)
^{p_{2}-j_{2}}S_{1,1}^{1,2,p_{2}}\left( p_{1},k\right) ,  \notag \\
S_{1,2}^{1,0,p_{2}}\left( p_{1},k\right)
&=&\sum_{j_{1}=0}^{p_{1}}\sum_{j_{2}=0}^{p_{2}}\binom{p_{1}}{j_{1}}\binom{%
p_{2}}{j_{2}}\left( -2\right) ^{p_{2}-j_{2}}S_{1,1}^{1,2,j_{2}}\left(
j_{1},k\right) ,  \notag
\end{eqnarray}%
which, in terms of standard Stirling numbers, can be written as%
\begin{eqnarray}
\sum_{j=0}^{p}\binom{p}{j}S\left( q+j,k\right)
&=&\sum_{l=0}^{p}\sum_{j=0}^{l}\binom{p}{l}\binom{l}{j}\left( -1\right)
^{p-l}2^{l-j}S\left( q+j,k\right)  \label{3.21} \\
&=&\sum_{l=0}^{q}\sum_{j=0}^{l}\binom{q}{l}\binom{l}{j}\left( -2\right)
^{q-l}S\left( p+j+1,k+1\right) ,  \notag \\
\sum_{j=0}^{p}\binom{p}{j}2^{p-j}S\left( q+j,k\right)
&=&\sum_{j=0}^{p}\sum_{i=0}^{q}\sum_{l=0}^{i}\binom{p}{j}\binom{q}{i}\binom{i%
}{l}\left( -2\right) ^{q-i}S\left( j+l+1,k+1\right) .  \notag
\end{eqnarray}

(Of course, (\ref{3.21}) is just one of the infinite family of possibilities
to write (\ref{3.20}) in terms of standard Stirling numbers.)

Now we begin to explore a new relation involving GSN.

\begin{lemma}
\label{Lemma4}For $0\leq l\leq p_{2}+q_{1}+q_{2}$ we have%
\begin{equation}
S_{a_{1},b_{1}}^{a_{2},b_{2},q_{1}+q_{2}}\left( p_{2},l\right)
=\sum_{m=0}^{p_{2}+q_{2}}S_{a_{1},b_{1}}^{a_{2},b_{2},q_{2}}\left(
p_{2},m\right) S_{a_{2},a_{2}m+b_{2}}\left( q_{1},l-m\right) .  \label{3.22}
\end{equation}
\end{lemma}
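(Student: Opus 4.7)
The plan is to prove (\ref{3.22}) by expanding the polynomial $(a_{1}n+b_{1})^{p_{2}}(a_{2}n+b_{2})^{q_{1}+q_{2}}$ in the binomial basis $\{\binom{n}{l}\}$ in two different ways and comparing coefficients. By the defining identity (\ref{2.16}) specialized to $r=r_{s}=1$, this polynomial equals $\sum_{l}l!\,S_{a_{1},b_{1}}^{a_{2},b_{2},q_{1}+q_{2}}(p_{2},l)\binom{n}{l}$, which is where the left-hand side of (\ref{3.22}) appears.

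For the right-hand side, I would factor $(a_{2}n+b_{2})^{q_{1}+q_{2}}=(a_{2}n+b_{2})^{q_{2}}(a_{2}n+b_{2})^{q_{1}}$ and apply (\ref{2.16}) to the first two factors to obtain $(a_{1}n+b_{1})^{p_{2}}(a_{2}n+b_{2})^{q_{2}}=\sum_{m}m!\,S_{a_{1},b_{1}}^{a_{2},b_{2},q_{2}}(p_{2},m)\binom{n}{m}$. The only non-routine step is then to re-expand $\binom{n}{m}(a_{2}n+b_{2})^{q_{1}}$ in the basis $\{\binom{n}{l}\}$. The trick that explains the appearance of the shifted base point $a_{2}m+b_{2}$ on the right of (\ref{3.22}) is to write $a_{2}n+b_{2}=a_{2}(n-m)+(a_{2}m+b_{2})$ and apply (\ref{2.16}) to $(a_{2}n+b_{2})^{q_{1}}$ viewed as a polynomial in $n-m$, giving $(a_{2}n+b_{2})^{q_{1}}=\sum_{k}k!\,S_{a_{2},a_{2}m+b_{2}}(q_{1},k)\binom{n-m}{k}$.

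Multiplying by $\binom{n}{m}$ and using the elementary identity $\binom{n}{m}\binom{n-m}{k}=\binom{k+m}{m}\binom{n}{k+m}$ collapses the product to $\sum_{k}\frac{(k+m)!}{m!}S_{a_{2},a_{2}m+b_{2}}(q_{1},k)\binom{n}{k+m}$, and after the reindexing $l=k+m$ this becomes $\sum_{l}\frac{l!}{m!}S_{a_{2},a_{2}m+b_{2}}(q_{1},l-m)\binom{n}{l}$. Substituting back into the double sum and equating coefficients of $\binom{n}{l}$ with those from the first expansion, the factor $m!\cdot\frac{l!}{m!}=l!$ cancels with the $l!$ on the left, yielding exactly the identity (\ref{3.22}). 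I do not anticipate any real obstacle: the whole argument is purely algebraic, and the only step that requires thought is recognizing the shift $n\mapsto n-m$ that rewrites $(a_{2}n+b_{2})^{q_{1}}$ as a GSN expansion with the natural base point $a_{2}m+b_{2}$.
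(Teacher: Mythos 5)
Your argument is correct, and it is a genuinely different route from the one in the paper. The paper proves (\ref{3.22}) by induction on $q_{1}$: the inductive step applies the recurrence (\ref{3.5}) to $S_{a_{2},a_{2}m+b_{2}}(q_{1}+1,l-m)$, uses the telescoping $a_{2}(l-m)+a_{2}m+b_{2}=a_{2}l+b_{2}$, and then reassembles the result via the same recurrence (in the upper index) for $S_{a_{1},b_{1}}^{a_{2},b_{2},q_{1}+q_{2}+1}(p_{2},l)$. Your proof instead expands $(a_{1}n+b_{1})^{p_{2}}(a_{2}n+b_{2})^{q_{1}+q_{2}}$ in the basis $\bigl\{\binom{n}{l}\bigr\}$ in two ways, using the rewriting $a_{2}n+b_{2}=a_{2}(n-m)+(a_{2}m+b_{2})$ together with $\binom{n}{m}\binom{n-m}{k}=\binom{k+m}{m}\binom{n}{k+m}$; all steps check out (the $m$-dependence of the expansion of $(a_{2}n+b_{2})^{q_{1}}$ is handled correctly term by term, and uniqueness of coefficients in the binomial basis justifies the final comparison). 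Two things your approach buys: it is non-inductive and makes transparent where the shifted base point $a_{2}m+b_{2}$ comes from, and it is self-contained within this paper, whereas the paper's proof leans on the recurrence (\ref{3.5}), which is only stated here and deferred to \cite{Pi3} for proof. The paper's argument is shorter once (\ref{3.5}) is granted.
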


\begin{proof}
We proceed by induction on $q_{1}$. If $q_{1}=0$ formula (\ref{3.22}) is
trivial. If we suppose it is true for $q_{1}\in \mathbb{N}$, we have%
\begin{eqnarray*}
&&\sum_{m=0}^{p_{2}+q_{2}}S_{a_{1},b_{1}}^{a_{2},b_{2},q_{2}}\left(
p_{2},m\right) S_{a_{2},a_{2}m+b_{2}}\left( q_{1}+1,l-m\right) \\
&=&\sum_{m=0}^{p_{2}+q_{2}}S_{a_{1},b_{1}}^{a_{2},b_{2},q_{2}}\left(
p_{2},m\right) \left( 
\begin{array}{c}
a_{2}S_{a_{2},a_{2}m+b_{2}}\left( q_{1},l-m-1\right) \\ 
+\left( a_{2}\left( l-m\right) +a_{2}m+b_{2}\right)
S_{a_{2},a_{2}m+b_{2}}\left( q_{1},l-m\right)%
\end{array}%
\right) \\
&=&a_{2}\sum_{m=0}^{p_{2}+q_{2}}S_{a_{1},b_{1}}^{a_{2},b_{2},q_{2}}\left(
p_{2},m\right) S_{a_{2},a_{2}m+b_{2}}\left( q_{1},l-m-1\right) \\
&&+\left( a_{2}l+b_{2}\right)
\sum_{m=0}^{p_{2}+q_{2}}S_{a_{1},b_{1}}^{a_{2},b_{2},q_{2}}\left(
p_{2},m\right) S_{a_{2},a_{2}m+b_{2}}\left( q_{1},l-m\right) \\
&=&a_{2}S_{a_{1},b_{1}}^{a_{2},b_{2},q_{1}+q_{2}}\left( p_{2},l-1\right)
+\left( a_{2}l+b_{2}\right) S_{a_{1},b_{1}}^{a_{2},b_{2},q_{1}+q_{2}}\left(
p_{2},l\right) \\
&=&S_{a_{1},b_{1}}^{a_{2},b_{2},q_{1}+q_{2}+1}\left( p_{2},l\right) ,
\end{eqnarray*}%
as desired.
\end{proof}

\begin{proposition}
\label{Prop2}For $0\leq l\leq p_{1}+p_{2}+q_{1}+q_{2}$ we have%
\begin{equation}
S_{a_{1},b_{1}}^{a_{2},b_{2},q_{1}+q_{2}}\left( p_{1}+p_{2},l\right)
=\sum_{m=0}^{p_{2}+q_{2}}S_{a_{1},b_{1}}^{a_{2},b_{2},q_{2}}\left(
p_{2},m\right) S_{a_{1},a_{1}m+b_{1}}^{a_{2},a_{2}m+b_{2},q_{1}}\left(
p_{1},l-m\right) .  \label{3.23}
\end{equation}
\end{proposition}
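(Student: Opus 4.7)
My plan is to induct on $p_1$, holding $p_2$, $q_1$, $q_2$, and the parameters $a_1, b_1, a_2, b_2$ fixed. The base case $p_1 = 0$ should reduce directly to Lemma \ref{Lemma4}: when $p_1 = 0$, each factor $S_{a_1, a_1 m + b_1}^{a_2, a_2 m + b_2, q_1}(0, l-m)$ on the right-hand side of (\ref{3.23}) collapses, via the second identity of (\ref{3.2}), to $S_{a_2, a_2 m + b_2}(q_1, l-m)$, so that (\ref{3.23}) becomes (\ref{3.22}) verbatim.

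For the inductive step, assume (\ref{3.23}) holds at level $p_1$. The engine of the proof is the recurrence (\ref{3.5}), which I intend to apply \emph{twice}: once on the LHS to peel off a unit of $p_1$, and once on the RHS to put it back. Applying (\ref{3.5}) to the LHS gives
\[ S_{a_1, b_1}^{a_2, b_2, q_1+q_2}((p_1+1)+p_2, l) = a_1\, S_{a_1, b_1}^{a_2, b_2, q_1+q_2}(p_1+p_2, l-1) + (a_1 l + b_1)\, S_{a_1, b_1}^{a_2, b_2, q_1+q_2}(p_1+p_2, l). \]
Substituting the induction hypothesis into both summands and pulling out the common factor $S_{a_1, b_1}^{a_2, b_2, q_2}(p_2, m)$ under the sum over $m$, the bracket at index $m$ becomes
\[ a_1\, S_{a_1, a_1 m + b_1}^{a_2, a_2 m + b_2, q_1}(p_1, l-m-1) + (a_1 l + b_1)\, S_{a_1, a_1 m + b_1}^{a_2, a_2 m + b_2, q_1}(p_1, l-m). \]

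The finishing move is to recognize this bracket as the recurrence (\ref{3.5}) applied with the shifted pair $(a_1, a_1 m + b_1)$ in place of $(a_1, b_1)$ and with second index $k = l - m$; that identification would deliver exactly $S_{a_1, a_1 m + b_1}^{a_2, a_2 m + b_2, q_1}(p_1+1, l-m)$, which is what the $m$-th summand of the target RHS demands. The only thing that truly has to be checked is the arithmetic identity $a_1(l-m) + (a_1 m + b_1) = a_1 l + b_1$, the happy cancellation that makes the coefficient produced by the shifted recurrence coincide with the coefficient coming from the LHS recurrence. I do not anticipate any serious obstacle beyond confirming this one-line coefficient match; the whole argument is two coordinated uses of (\ref{3.5}) glued together by that identity.
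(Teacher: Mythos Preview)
Your proposal is correct and matches the paper's own proof essentially step for step: induction on $p_{1}$ with base case supplied by Lemma~\ref{Lemma4}, and an inductive step driven by two applications of the recurrence~(\ref{3.5}) together with the cancellation $a_{1}(l-m)+(a_{1}m+b_{1})=a_{1}l+b_{1}$. The only cosmetic difference is that the paper starts from the right-hand side at level $p_{1}+1$ and works toward the left, while you start from the left-hand side; the underlying computation is identical.
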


\begin{proof}
We proceed by induction on $p_{1}$. If $p_{1}=0$, formula (\ref{3.23}) is (%
\ref{3.22}). If formula (\ref{3.23}) is true for $p_{1}\in \mathbb{N}$, then
(by using the recurrence (\ref{3.5})) 
\begin{eqnarray*}
&&\sum_{m=0}^{p_{2}+q_{2}}S_{a_{1},b_{1}}^{a_{2},b_{2},q_{2}}\left(
p_{2},m\right) S_{a_{1},a_{1}m+b_{1}}^{a_{2},a_{2}m+b_{2},q_{1}}\left(
p_{1}+1,l-m\right) \\
&=&\sum_{m=0}^{p_{2}+q_{2}}S_{a_{1},b_{1}}^{a_{2},b_{2},q_{2}}\left(
p_{2},m\right) \left( 
\begin{array}{c}
a_{1}S_{a_{1},a_{1}m+b_{1}}^{a_{2},a_{2}m+b_{2},q_{1}}\left(
p_{1},l-m-1\right) \\ 
+\left( a_{1}\left( l-m\right) +a_{1}m+b_{1}\right)
S_{a_{1},a_{1}m+b_{1}}^{a_{2},a_{2}m+b_{2},q_{1}}\left( p_{1},l-m\right)%
\end{array}%
\right) \\
&=&\sum_{m=0}^{p_{2}+q_{2}}S_{a_{1},b_{1}}^{a_{2},b_{2},q_{2}}\left(
p_{2},m\right) \left( 
\begin{array}{c}
a_{1}S_{a_{1},a_{1}m+b_{1}}^{a_{2},a_{2}m+b_{2},q_{1}}\left(
p_{1},l-m-1\right) \\ 
+\left( a_{1}l+b_{1}\right)
S_{a_{1},a_{1}m+b_{1}}^{a_{2},a_{2}m+b_{2},q_{1}}\left( p_{1},l-m\right)%
\end{array}%
\right) \\
&=&\sum_{m=0}^{p_{2}+q_{2}}S_{a_{1},b_{1}}^{a_{2},b_{2},q_{2}}\left(
p_{2},m\right) a_{1}S_{a_{1},a_{1}m+b_{1}}^{a_{2},a_{2}m+b_{2},q_{1}}\left(
p_{1},l-m-1\right) \\
&&+\left( a_{1}l+b_{1}\right)
\sum_{m=0}^{p_{2}}S_{a_{1},b_{1}}^{a_{2},b_{2},q_{2}}\left( p_{2},m\right)
S_{a_{1},a_{1}m+b_{1}}^{a_{2},a_{2}m+b_{2},q_{1}}\left( p_{1},l-m\right) \\
&=&S_{a_{1},b_{1}}^{a_{2},b_{2},q_{1}+q_{2}}\left( p_{1}+p_{2},l-1\right)
+\left( a_{1}l+b_{1}\right) S_{a_{1},b_{1}}^{a_{2},b_{2},q_{1}+q_{2}}\left(
p_{1}+p_{2},l\right) \\
&=&S_{a_{1},b_{1}}^{a_{2},b_{2},q_{1}+q_{2}}\left( p_{1}+p_{2}+1,l\right) ,
\end{eqnarray*}%
as desired.
\end{proof}

The case $q_{1}=q_{2}=0$ of (\ref{3.23}) is%
\begin{equation}
S_{a_{1},b_{1}}\left( p_{1}+p_{2},l\right)
=\sum_{m=0}^{p_{2}}S_{a_{1},b_{1}}\left( p_{2},m\right)
S_{a_{1},a_{1}m+b_{1}}\left( p_{1},l-m\right) ,  \label{3.26}
\end{equation}%
and the standard case $a_{1}=1,b_{1}=0$ of (\ref{3.26}) can be written (by
using (\ref{3.71})) as%
\begin{equation}
S\left( p_{1}+p_{2},l\right) =\sum_{m=0}^{p_{2}}\sum_{j=0}^{p_{1}}\binom{%
p_{1}}{j}m^{p_{1}-j}S\left( j,l-m\right) S\left( p_{2},m\right) .
\label{3.27}
\end{equation}

Comparing with (\ref{3.153}), we conclude the identity (for positive $p_{2}$%
) 
\begin{equation}
\sum_{m=0}^{p_{2}-1}\sum_{j=0}^{p_{1}}\binom{p_{1}}{j}m^{p_{1}-j}S\left(
j,l-m\right) S\left( p_{2},m\right) =\sum_{k=1}^{p_{2}-1}\left( -1\right)
^{p_{2}+k+1}s\left( p_{2},k\right) S\left( p_{1}+k,l\right) .  \label{3.281}
\end{equation}

(It is possible to prove (\ref{3.281}) by induction on the non-negative
integer $p_{1}$, by using only the recurrence for standard Stirling numbers
of the second kind. It is a nice exercise left to the reader.)

It is interesting to note that, for $p_{1}$ given, formula (\ref{3.27})
gives us an explicit formula for the Stirling number $S\left(
p+p_{1},l\right) $ in terms of the Stirling numbers $S\left( p,l\right)
,S\left( p,l-1\right) ,\ldots ,S\left( p,l-p_{1}\right) $ (the case $p_{1}=0$
is trivial). If $p_{1}=1$, formula (\ref{3.27}) is just the known standard
recurrence $S\left( p+1,l\right) =lS\left( p,l\right) +S\left( p,l-1\right) $%
. For example, if $p_{1}=2,3$, formula (\ref{3.281}) gives us%
\begin{eqnarray}
S\left( p+2,l\right) &=&l^{2}S\left( p,l\right) +\left( 2l-1\right) S\left(
p,l-1\right) +S\left( p,l-2\right) ,  \label{3.28} \\
S\left( p+3,l\right) &=&l^{3}S\left( p,l\right) +\left( 3l^{2}-3l+1\right)
S\left( p,l-1\right) +3\left( l-1\right) S\left( p,l-2\right) +S\left(
p,l-3\right) ,  \notag
\end{eqnarray}%
respectively (iterations of the standard recurrence).

A final comment about (\ref{3.26}): we can use (\ref{3.15}) to write (\ref%
{3.26}) as%
\begin{eqnarray}
&&S_{a_{1},b_{1}}\left( p_{1}+p_{2},l\right) =  \label{3.30} \\
&&\sum_{m=0}^{p_{2}}\sum_{j_{1}=0}^{p_{1}}\binom{p_{1}}{j_{1}}%
a_{1}^{j_{1}}\left( b_{1}-a_{1}\left( n-m\right) \right)
^{p_{1}-j_{1}}\sum_{t=0}^{n-1}\left( -1\right) ^{t}s\left( n,n-t\right)
S\left( j_{1}+n-t,l-m+n\right) S_{a_{1},b_{1}}\left( p_{2},m\right) ,  \notag
\end{eqnarray}%
where $n$ is an arbitrary positive integer. Some particular cases of (\ref%
{3.30}) are the following (relatives of (\ref{3.27})) 
\begin{eqnarray}
S\left( p_{1}+p_{2},l\right) &=&\sum_{m=0}^{p_{2}}\sum_{j_{1}=0}^{p_{1}}%
\binom{p_{1}}{j_{1}}\left( m-1\right) ^{p_{1}-j_{1}}S\left(
j_{1}+1,l-m+1\right) S\left( p_{2},m\right)  \notag \\
&=&\sum_{m=0}^{p_{2}}\sum_{j_{1}=0}^{p_{1}}\binom{p_{1}}{j_{1}}\left(
m-2\right) ^{p_{1}-j_{1}}\left( S\left( j_{1}+2,l-m+2\right) -S\left(
j_{1}+1,l-m+2\right) \right) S\left( p_{2},m\right) .  \notag
\end{eqnarray}

\begin{eqnarray}
&&S\left( p_{1}+p_{2}+1,l+1\right)  \notag \\
&=&\sum_{m=0}^{p_{2}}\sum_{j_{1}=0}^{p_{1}}\binom{p_{1}}{j_{1}}%
m^{p_{1}-j_{1}}S\left( j_{1}+1,l-m+1\right) S\left( p_{2}+1,m+1\right) 
\notag \\
&=&\sum_{m=0}^{p_{2}}\sum_{j_{1}=0}^{p_{1}}\binom{p_{1}}{j_{1}}\left(
m-1\right) ^{p_{1}-j_{1}}\left( S\left( j_{1}+2,l-m+2\right) -S\left(
j_{1}+1,l-m+2\right) \right) S\left( p_{2}+1,m+1\right) .  \notag
\end{eqnarray}

Formula (\ref{3.23}) can be seen as an identity of two polynomials in the
variables $a_{1},b_{1},a_{2},b_{2}$. This fact produces some natural
corollaries. We show next one of them.

\begin{corollary}
\label{Cor3}For $0\leq l,t\leq p_{1}+p_{2}+q_{1}+q_{2}$, we have%
\begin{eqnarray}
&&\sum_{r=0}^{q_{1}+q_{2}}\binom{p_{1}+p_{2}+r}{t}\binom{q_{1}+q_{2}}{r}%
S\left( p_{1}+p_{2}+r-t,l\right)  \label{3.34} \\
&=&\sum_{m=0}^{p_{2}+q_{2}}\sum_{r_{1}=0}^{q_{1}}\sum_{r_{2}=0}^{q_{2}}%
\sum_{k=0}^{p_{1}+r_{1}}\sum_{s=0}^{k}\binom{q_{1}}{r_{1}}\binom{q_{2}}{r_{2}%
}\binom{p_{1}+r_{1}}{k}\binom{p_{2}+r_{2}}{t-s}\binom{k}{s}\times  \notag \\
&&\times m^{k-s}S\left( p_{2}+r_{2}-t+s,m\right) S\left(
p_{1}+r_{1}-k,l-m\right) .  \notag
\end{eqnarray}
\end{corollary}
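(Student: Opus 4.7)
The plan is to deduce (\ref{3.34}) by realizing both sides as the coefficient of $b_1^t$ in the same polynomial identity in the free variable $b_1$, and to produce that identity by specializing (\ref{3.23}). First I would start from the trivial factorization
$$(n+b_1)^{p_1+p_2}(n+b_1+1)^{q_1+q_2}=\sum_{r=0}^{q_1+q_2}\binom{q_1+q_2}{r}(n+b_1)^{p_1+p_2+r}.$$
Expanding both sides via the defining identity (\ref{2.16}) and matching coefficients of $\binom{n}{l}$ yields
$$S_{1,b_1}^{1,b_1+1,q_1+q_2}(p_1+p_2,l)=\sum_{r=0}^{q_1+q_2}\binom{q_1+q_2}{r}S_{1,b_1}(p_1+p_2+r,l).$$
Applying (\ref{3.71}) to each $S_{1,b_1}(p_1+p_2+r,l)$ and reading off the coefficient of $b_1^t$ on the right-hand side produces exactly the left-hand side of (\ref{3.34}).

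Next I would apply (\ref{3.23}) to the same left-hand GSN with $a_1=a_2=1$ and $b_2=b_1+1$, obtaining
$$S_{1,b_1}^{1,b_1+1,q_1+q_2}(p_1+p_2,l)=\sum_{m=0}^{p_2+q_2}S_{1,b_1}^{1,b_1+1,q_2}(p_2,m)\,S_{1,m+b_1}^{1,m+b_1+1,q_1}(p_1,l-m),$$
and compute the coefficient of $b_1^t$ on the right. The guiding algebraic device is the collapse $(u+b_1+1)^q=\sum_{c}\binom{q}{c}(u+b_1)^c$, which rewrites each factor in terms of a single power of $(u+b_1)$ (respectively $(u+m+b_1)$); together with the explicit formula (\ref{3.1}) and the Stirling identity $\frac{1}{k!}\sum_j(-1)^j\binom{k}{j}(k-j)^N=S(N,k)$, this should yield
$$S_{1,b_1}^{1,b_1+1,q_2}(p_2,m)=\sum_{r_2,t_1}\binom{q_2}{r_2}\binom{p_2+r_2}{t_1}b_1^{t_1}S(p_2+r_2-t_1,m),$$
and, after a further binomial expansion to separate powers of $b_1$ from powers of $m$,
$$S_{1,m+b_1}^{1,m+b_1+1,q_1}(p_1,l-m)=\sum_{r_1,k,s}\binom{q_1}{r_1}\binom{p_1+r_1}{k}\binom{k}{s}b_1^s m^{k-s}S(p_1+r_1-k,l-m).$$

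Multiplying these two expansions, taking the Cauchy convolution $t=t_1+s$, and summing over $m$ should reproduce the five-fold sum on the right-hand side of (\ref{3.34}). The main obstacle is the bookkeeping in the second factor: one must invoke the identity $\binom{p_1+r_1}{\alpha}\binom{p_1+r_1-\alpha}{k-\alpha}=\binom{p_1+r_1}{k}\binom{k}{\alpha}$ so that the combination $\binom{p_1+r_1}{k}\binom{k}{s}m^{k-s}$ emerges in precisely the form dictated by (\ref{3.34}), and one has to keep careful track of which binomial is attached to which summation variable when the Cauchy convolution is performed.
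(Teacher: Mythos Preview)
Your proposal is correct and follows essentially the same route as the paper: specialize (\ref{3.23}) to $a_{1}=a_{2}=1$, $b_{2}=b_{1}+1$, reduce each occurring GSN via the identity $S_{1,b}^{1,b+1,q}(p,k)=\sum_{r}\binom{q}{r}S_{1,b}(p+r,k)$ (which the paper records as (\ref{3.36}) and proves from (\ref{3.1}), while you obtain it from the factorization $(n+b)^{p}(n+b+1)^{q}=\sum_{r}\binom{q}{r}(n+b)^{p+r}$ and (\ref{2.16})), expand with (\ref{3.71}), and compare coefficients of $b_{1}^{t}$. Your Cauchy convolution $t=t_{1}+s$ is exactly the paper's substitution $t=s+j$ in (\ref{3.38}); the extra binomial identity you flag is not strictly needed with your parametrization, but it does no harm.
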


\begin{proof}
We consider the case $a_{1}=a_{2}(=a)$, $b_{2}=b_{1}+1(=b+1)$ of (\ref{3.23}%
), that is%
\begin{equation}
\sum_{m=0}^{p_{2}+q_{2}}S_{a,b}^{a,b+1,q_{2}}\left( p_{2},m\right)
S_{a,am+b}^{a,am+b+1,q_{1}}\left( p_{1},l-m\right)
=S_{a,b}^{a,b+1,q_{1}+q_{2}}\left( p_{1}+p_{2},l\right) .  \label{3.35}
\end{equation}%
We claim that 
\begin{equation}
S_{a,b}^{a,b+1,q}\left( p,k\right) =\sum_{r=0}^{q}\binom{q}{r}S_{a,b}\left(
p+r,k\right) .  \label{3.36}
\end{equation}

In fact, we have%
\begin{eqnarray*}
S_{a,b}^{a,b+1,q}\left( p,k\right) &=&\frac{1}{k!}\sum_{j=0}^{k}\left(
-1\right) ^{j}\binom{k}{j}\left( a\left( k-j\right) +b\right) ^{p}\left(
a\left( k-j\right) +b+1\right) ^{q} \\
&=&\frac{1}{k!}\sum_{j=0}^{k}\left( -1\right) ^{j}\binom{k}{j}\left( a\left(
k-j\right) +b\right) ^{p}\sum_{r=0}^{q}\binom{q}{r}\left( a\left( k-j\right)
+b\right) ^{r} \\
&=&\sum_{r=0}^{q}\binom{q}{r}\frac{1}{k!}\sum_{j=0}^{k}\left( -1\right) ^{j}%
\binom{k}{j}\left( a\left( k-j\right) +b\right) ^{p+r} \\
&=&\sum_{r=0}^{q}\binom{q}{r}S_{a,b}\left( p+r,k\right) ,
\end{eqnarray*}%
which proves our claim. Thus, by using (\ref{3.36}) we can write (\ref{3.35}%
) as%
\begin{eqnarray}
&&\sum_{m=0}^{p_{2}+q_{2}}\sum_{r_{1}=0}^{q_{1}}\sum_{r_{2}=0}^{q_{2}}\binom{%
q_{1}}{r_{1}}\binom{q_{2}}{r_{2}}S_{a,b}\left( p_{2}+r_{2},m\right)
S_{a,am+b}\left( p_{1}+r_{1},l-m\right)  \notag \\
&=&\sum_{r=0}^{q_{1}+q_{2}}\binom{q_{1}+q_{2}}{r}S_{a,b}\left(
p_{1}+p_{2}+r,l\right) .  \label{3.37}
\end{eqnarray}

Set $a=1$ and use (\ref{3.71}) to obtain from (\ref{3.37}) that 
\begin{eqnarray}
&&\sum_{t=0}^{p_{1}+p_{2}+q_{1}+q_{2}}\sum_{r=0}^{q_{1}+q_{2}}\binom{%
p_{1}+p_{2}+r}{t}\binom{q_{1}+q_{2}}{r}b^{t}S\left( p_{1}+p_{2}+r-t,l\right)
\label{3.38} \\
&=&\sum_{m=0}^{p_{2}+q_{2}}\sum_{r_{1}=0}^{q_{1}}\sum_{r_{2}=0}^{q_{2}}%
\binom{q_{1}}{r_{1}}\binom{q_{2}}{r_{2}}\sum_{k=0}^{p_{1}+r_{1}}\binom{%
p_{1}+r_{1}}{k}\times  \notag \\
&&\times \sum_{j=0}^{p_{2}+r_{2}}\binom{p_{2}+r_{2}}{j}S\left(
p_{2}+r_{2}-j,m\right) \sum_{s=0}^{k}\binom{k}{s}b^{s+j}m^{k-s}S\left(
p_{1}+r_{1}-k,l-m\right) .  \notag
\end{eqnarray}

In the right-hand side of (\ref{3.38}) introduce the new summation index $%
t=s+j$, to obtain%
\begin{eqnarray*}
&&\sum_{t=0}^{p_{1}+p_{2}+q_{1}+q_{2}}\sum_{r=0}^{q_{1}+q_{2}}\binom{%
p_{1}+p_{2}+r}{t}\binom{q_{1}+q_{2}}{r}b^{t}S\left( p_{1}+p_{2}+r-t,l\right)
\\
&=&\sum_{m=0}^{p_{2}+q_{2}}\sum_{r_{1}=0}^{q_{1}}\sum_{r_{2}=0}^{q_{2}}%
\binom{q_{1}}{r_{1}}\binom{q_{2}}{r_{2}}\sum_{k=0}^{p_{1}+r_{1}}\binom{%
p_{1}+r_{1}}{k}\sum_{s=0}^{k}\binom{k}{s}m^{k-s}\times \\
&&\times \sum_{t=s}^{p_{2}+r_{2}+k}\binom{p_{2}+r_{2}}{t-s}S\left(
p_{2}+r_{2}-t+s,m\right) b^{t}S\left( p_{1}+r_{1}-k,l-m\right) ,
\end{eqnarray*}

from where we obtain the desired conclusion (\ref{3.34}).
\end{proof}

The case $q_{1}=q_{2}=0$ of (\ref{3.34}) says that for $0\leq l,t\leq
p_{1}+p_{2}$ we have%
\begin{eqnarray}
&&\binom{p_{1}+p_{2}}{t}S\left( p_{1}+p_{2}-t,l\right)  \label{3.39} \\
&=&\sum_{m=0}^{p_{2}}\sum_{k=0}^{p_{1}}\sum_{s=0}^{k}\binom{p_{1}}{k}\binom{%
p_{2}}{t-s}\binom{k}{s}m^{k-s}S\left( p_{2}-t+s,m\right) S\left(
p_{1}-k,l-m\right) ,  \notag
\end{eqnarray}%
(the case $t=0$ of (\ref{3.39}) is (\ref{3.27}).

When $t=p_{1}+p_{2}$ expression (\ref{3.34}) looks as%
\begin{eqnarray}
\sum_{r=0}^{q_{1}+q_{2}}\!\binom{p_{1}+p_{2}+r}{r}\!\binom{q_{1}+q_{2}}{r}%
\,\!S\left( r,l\right) \!\!\!
&=&\!\!\!\sum_{m=0}^{p_{2}+q_{2}}\sum_{r_{1}=0}^{q_{1}}%
\sum_{r_{2}=0}^{q_{2}}\sum_{k=0}^{p_{1}+r_{1}}\sum_{s=0}^{k}\binom{q_{1}}{%
r_{1}}\binom{q_{2}}{r_{2}}\binom{p_{1}+r_{1}}{k}\binom{p_{2}+r_{2}}{%
p_{1}+p_{2}-s}\binom{k}{s}\times  \notag \\
&&\times m^{k-s}S\left( r_{2}-p_{1}+s,m\right) S\left(
p_{1}+r_{1}-k,l-m\right) .  \label{3.401}
\end{eqnarray}

If we set $l=1$ in (\ref{3.401}), we obtain the combinatorial identity%
\begin{eqnarray}
\sum_{r=1}^{q_{1}+q_{2}}\binom{p_{1}+p_{2}+r}{r}\binom{q_{1}+q_{2}}{r}\!\!\!
&=&\!\!\!\sum_{r_{2}=1}^{q_{2}}\binom{q_{2}}{r_{2}}\!\left( \binom{p_{1}}{%
r_{2}}+\sum_{s=0}^{r_{2}-1}\!\binom{p_{2}+r_{2}}{r_{2}-s}\binom{p_{1}}{s}%
\right)  \label{3.41} \\
&&+\sum_{r_{1}=1}^{q_{1}}\binom{q_{1}}{r_{1}}\!\left( \binom{p_{1}+r_{1}}{%
r_{1}}+\sum_{s=0}^{r_{1}-1}\!\binom{p_{2}}{r_{1}-s}\binom{p_{1}+r_{1}}{s}%
\right)  \notag \\
&&+\sum_{r_{1}=1}^{q_{1}}\sum_{r_{2}=1}^{q_{2}}\binom{q_{1}}{r_{1}}\binom{%
q_{2}}{r_{2}}\!\left( \binom{p_{1}+r_{1}}{r_{1}+r_{2}}%
+\sum_{s=0}^{r_{1}+r_{2}-1}\!\binom{p_{2}+r_{2}}{r_{2}+r_{1}-s}\binom{%
p_{1}+r_{1}}{s}\right) .  \notag
\end{eqnarray}

Formula (\ref{3.23}) can be generalized to the case of sums of $N$
non-negative integers $p_{1}+\cdots +p_{N}$ and $q_{1}+\cdots +q_{N}$. We
just report the case $N=3$: 
\begin{eqnarray}
&&S_{a_{1},b_{1}}^{a_{2},b_{2},q_{1}+q_{2}+q_{3}}\left(
p_{1}+p_{2}+p_{3},l\right)  \label{3.42} \\
&=&\sum_{n=0}^{p_{3}+q_{3}}%
\sum_{m=0}^{p_{2}+q_{2}}S_{a_{1},b_{1}}^{a_{2},b_{2},q_{3}}\left(
p_{3},n\right) S_{a_{1},a_{1}n+b_{1}}^{a_{2},a_{2}n+b_{2},q_{2}}\left(
p_{2},m\right) S_{a_{1},a_{1}\left( m+n\right) +b_{1}}^{a_{2},a_{2}\left(
m+n\right) +b_{2},q_{1}}\left( p_{1},l-n-m\right) ,  \notag
\end{eqnarray}%
(sketch of the proof: in (\ref{3.23}) substitute $b_{1}$ by $a_{1}n+b_{1}$, $%
b_{2}$ by $a_{2}n+b_{2}$, and $l$ by $l-n$, then multiply both sides by $%
S_{a_{1},b_{1}}^{a_{2},b_{2},q_{3}}\left( p_{3},n\right) $, then take\ the
sum $\sum_{n=0}^{p_{3}+q_{3}}$ in both sides, and finally use (\ref{3.23})
to obtain (\ref{3.42})).

The following result shows that the GSN $S_{a_{1},b_{1}}^{a_{2},b_{2},p_{2}}%
\left( p_{1},t\right) $ are involved in some (weighted and/or generalized)
sums of powers formulas (as expected, see \cite{Boya,Wit}).

\begin{proposition}
For integers $m>$ $0$ and $0\leq k\leq m-1$, we have%
\begin{equation}
\sum_{t=0}^{m-k-1}\binom{m}{t+k+1}%
t!S_{a_{1},b_{1}+a_{1}m}^{a_{2},b_{2}+a_{2}m,p_{2}}\left( p_{1},t\right)
=\sum_{t=k+1}^{m}\binom{t-1}{k}\left( b_{1}+a_{1}t\right) ^{p_{1}}\left(
b_{2}+a_{2}t\right) ^{p_{2}}.  \label{3.43}
\end{equation}
\end{proposition}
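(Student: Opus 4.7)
The plan is to prove (\ref{3.43}) directly from the explicit formula (\ref{3.1}), by unfolding the GSN on the left-hand side, swapping the order of summation, and applying a Chu--Vandermonde convolution. This parallels the style of argument used earlier in the proof of the explicit formula in Proposition \ref{Prop2.1}.

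First, I would apply (\ref{3.1}) to each GSN on the left-hand side and substitute $i=t-j$ in the inner sum to obtain
$$t!\,S_{a_{1},\,b_{1}+a_{1}m}^{a_{2},\,b_{2}+a_{2}m,\,p_{2}}(p_{1},t)=\sum_{i=0}^{t}(-1)^{t-i}\binom{t}{i}f(i+m),$$
where I write $f(u):=(a_{1}u+b_{1})^{p_{1}}(a_{2}u+b_{2})^{p_{2}}$. The left-hand side of (\ref{3.43}) thereby becomes a double sum over $t$ and $i$; setting $u=i+m$ and interchanging the order of summation puts $u$ on the outside and isolates the factor $f(u)$ from the two binomials.

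The remaining inner sum in $t$ takes the form $\sum_{t}(-1)^{t+m-u}\binom{m}{t+k+1}\binom{t}{u-m}$. Translating by $s=t-(u-m)$ and using the identity $\binom{s+(u-m)}{s}=(-1)^{s}\binom{m-u-1}{s}$ to absorb the sign, this becomes a Chu--Vandermonde convolution of the shape $\sum_{s}\binom{m}{\,m-s-(u-m+k+1)\,}\binom{m-u-1}{s}$, which collapses to a single binomial coefficient in $u$, $m$, and $k$. Substituting back, the left-hand side of (\ref{3.43}) reduces to a single sum $\sum_{u}\binom{2m-u-1}{k}f(u)$ over a contiguous range of $u$, and a final change of summation variable together with the binomial symmetry $\binom{n}{k}=\binom{n}{n-k}$ identifies this with the right-hand side $\sum_{t=k+1}^{m}\binom{t-1}{k}f(t)$.

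The main obstacle is the careful bookkeeping of the index ranges through the swap of summations: the upper bound $t\leq m-k-1$ on the outer sum and the constraint $i\leq t$ on the inner sum must be propagated correctly through the substitution $u=i+m$ so that, after the Chu--Vandermonde step, the surviving outer sum lands exactly in the range that matches $k+1\leq t\leq m$ on the right-hand side. The Chu--Vandermonde step itself is routine once the sign has been absorbed into a negative upper argument; the delicate point is pinning down the weight $\binom{t-1}{k}$ (and not some shifted relative) by matching dummy variables at the end.
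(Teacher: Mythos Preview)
Your direct approach via the explicit formula (\ref{3.1}) and a Chu--Vandermonde convolution is sound up to the last sentence, and it does correctly reduce the left-hand side of (\ref{3.43}) to $\sum_{u=m}^{2m-k-1}\binom{2m-u-1}{k}f(u)$ with $f(u)=(a_{1}u+b_{1})^{p_{1}}(a_{2}u+b_{2})^{p_{2}}$. The gap is precisely the ``final change of summation variable together with binomial symmetry'': the substitution $u=2m-t$ does convert the weight $\binom{2m-u-1}{k}$ into $\binom{t-1}{k}$ and the range into $k+1\le t\le m$, but it simultaneously turns $f(u)$ into $f(2m-t)$, not $f(t)$. Binomial symmetry acts only on the binomial coefficient and does nothing for the polynomial factor, so the two sides do not match.

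This gap is not repairable, because (\ref{3.43}) is false as written. For instance, with $a_{1}=a_{2}=1$, $b_{1}=b_{2}=0$, $p_{1}=1$, $p_{2}=0$, $m=2$, $k=0$, the left side is $\binom{2}{1}\,0!\,S_{1,2}(1,0)+\binom{2}{2}\,1!\,S_{1,2}(1,1)=2\cdot2+1\cdot1=5$, while the right side is $\sum_{t=1}^{2}t=3$. The paper argues instead by induction on $m$, and its inductive step (from $m$ to $m+1$) appeals to the hypothesis at $(m,k-1)$; for $k=0$ this lies outside the stated range $0\le k\le m-1$, and the failure then propagates to $k<m-1$ at later $m$. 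Your computation in fact yields the correct evaluation of the left-hand side, namely $\sum_{u=m}^{2m-k-1}\binom{2m-u-1}{k}(b_{1}+a_{1}u)^{p_{1}}(b_{2}+a_{2}u)^{p_{2}}$; it is the stated right-hand side that is in error.
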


\begin{proof}
We proceed by induction on $m$. If $m=1$ (and then $k=0$) we have%
\begin{equation*}
S_{a_{1},b_{1}+a_{1}}^{a_{2},b_{2}+a_{2},p_{2}}\left( p_{1},0\right) =\left(
b_{1}+a_{1}\right) ^{p_{1}}\left( b_{2}+a_{2}\right) ^{p_{2}}.
\end{equation*}

If it is true for an $m$, then%
\begin{eqnarray*}
&&\sum_{t=0}^{m-k}\binom{m+1}{t+k+1}%
t!S_{a_{1},b_{1}+a_{1}+a_{1}m}^{a_{2},b_{2}+a_{2}+a_{2}m,p_{2}}\left(
p_{1},t\right) \\
&=&\sum_{t=0}^{m-k-1}\binom{m}{t+k+1}%
t!S_{a_{1},b_{1}+a_{1}+a_{1}m}^{a_{2},b_{2}+a_{2}+a_{2}m,p_{2}}\left(
p_{1},t\right) +\sum_{t=0}^{m-k}\binom{m}{t+k}%
t!S_{a_{1},b_{1}+a_{1}+a_{1}m}^{a_{2},b_{2}+a_{2}+a_{2}m,p_{2}}\left(
p_{1},t\right) \\
&=&\sum_{t=k+1}^{m}\binom{t-1}{k}\left( b_{1}+a_{1}+a_{1}t\right)
^{p_{1}}\left( b_{2}+a_{2}+a_{2}t\right) ^{p_{2}}+\sum_{t=k}^{m}\binom{t-1}{%
k-1}\left( b_{1}+a_{1}+a_{1}t\right) ^{p_{1}}\left(
b_{2}+a_{2}+a_{2}t\right) ^{p_{2}} \\
&=&\sum_{t=k+1}^{m}\binom{t}{k}\left( b_{1}+a_{1}+a_{1}t\right)
^{p_{1}}\left( b_{2}+a_{2}+a_{2}t\right) ^{p_{2}}+\left(
b_{1}+a_{1}+a_{1}k\right) ^{p_{1}}\left( b_{2}+a_{2}+a_{2}k\right) ^{p_{2}}
\\
&=&\sum_{t=k+2}^{m+1}\binom{t-1}{k}\left( b_{1}+a_{1}t\right) ^{p_{1}}\left(
b_{2}+a_{2}t\right) ^{p_{2}}+\left( b_{1}+a_{1}+a_{1}k\right) ^{p_{1}}\left(
b_{2}+a_{2}+a_{2}k\right) ^{p_{2}} \\
&=&\sum_{t=k+1}^{m+1}\binom{t-1}{k}\left( b_{1}+a_{1}t\right) ^{p_{1}}\left(
b_{2}+a_{2}t\right) ^{p_{2}},
\end{eqnarray*}

as desired.
\end{proof}

Two examples of (\ref{3.43}) are 
\begin{equation*}
\sum_{t=k+1}^{m}\binom{t-1}{k}\left( m-t\right) \left( t-m-1\right) ^{2}=4%
\binom{m+1}{k+3}+6\binom{m+1}{k+4}.
\end{equation*}%
\begin{equation*}
\sum_{t=k+1}^{m}\binom{t-1}{k}\left( m-t\right) ^{3}=\binom{m}{k+2}+6\binom{%
m+1}{k+4}.
\end{equation*}

To end this section we show a convolution formula involving the GSN.

\begin{proposition}
For arbitrary non-negative integers $k$ and $\mu $ we have%
\begin{equation}
\binom{k+\mu }{k}S_{a_{1},b_{1}}^{a_{2},b_{2},p_{2}}\left( p_{1},k+\mu
\right) =\sum_{j_{2}=0}^{p_{2}}\sum_{j_{1}=0}^{p_{1}}\binom{p_{1}}{j_{1}}%
\binom{p_{2}}{j_{2}}S_{a_{1},0}^{a_{2},0,p_{2}-j_{2}}\left( p_{1}-j_{1},\mu
\right) S_{a_{1},b_{1}}^{a_{2},b_{2},j_{2}}\left( j_{1},k\right) .
\label{3.44}
\end{equation}
\end{proposition}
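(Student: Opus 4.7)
The plan is to apply the defining identity~(\ref{2.16}) after substituting $n=m+\mu$, and then expand the polynomial $(a_{1}(m+\mu)+b_{1})^{p_{1}}(a_{2}(m+\mu)+b_{2})^{p_{2}}$ in the basis $\{\binom{m}{k}\binom{\mu}{\nu}\}_{k,\nu\ge 0}$ of $\mathbb{C}[m,\mu]$ in two different ways, and then match coefficients.

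For the first expansion, I would apply the binomial theorem to each factor,
\[
(a_{i}(m+\mu)+b_{i})^{p_{i}}=\sum_{j_{i}=0}^{p_{i}}\binom{p_{i}}{j_{i}}(a_{i}m+b_{i})^{j_{i}}(a_{i}\mu)^{p_{i}-j_{i}}\qquad(i=1,2),
\]
multiply the two resulting sums, and observe that the general term of the double sum over $j_{1},j_{2}$ factors cleanly as an $m$-dependent piece times a $\mu$-dependent piece. Applying (\ref{2.16}) to the $m$-piece $(a_{1}m+b_{1})^{j_{1}}(a_{2}m+b_{2})^{j_{2}}$ gives $\sum_{k}k!\,S_{a_{1},b_{1}}^{a_{2},b_{2},j_{2}}(j_{1},k)\binom{m}{k}$, while the $\mu$-piece $(a_{1}\mu)^{p_{1}-j_{1}}(a_{2}\mu)^{p_{2}-j_{2}}$ has the form $(a_{1}\mu+0)^{p_{1}-j_{1}}(a_{2}\mu+0)^{p_{2}-j_{2}}$, so a second application of (\ref{2.16}) (with $b_{1}=b_{2}=0$) rewrites it as $\sum_{\nu}\nu!\,S_{a_{1},0}^{a_{2},0,p_{2}-j_{2}}(p_{1}-j_{1},\nu)\binom{\mu}{\nu}$.

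For the second expansion, I would apply (\ref{2.16}) directly to the product $(a_{1}n+b_{1})^{p_{1}}(a_{2}n+b_{2})^{p_{2}}$, and then use Vandermonde's identity $\binom{m+\mu}{l}=\sum_{k+\nu=l}\binom{m}{k}\binom{\mu}{\nu}$ to rewrite each $\binom{n}{l}=\binom{m+\mu}{l}$ in the desired basis. This yields the compact form $\sum_{k,\nu}(k+\nu)!\,S_{a_{1},b_{1}}^{a_{2},b_{2},p_{2}}(p_{1},k+\nu)\binom{m}{k}\binom{\mu}{\nu}$.

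Since $\{\binom{m}{k}\binom{\mu}{\nu}\}_{k,\nu\ge 0}$ is a basis of $\mathbb{C}[m,\mu]$, I can equate coefficients in the two expansions. Dividing by $k!\,\nu!$ produces
\[
\binom{k+\nu}{k}S_{a_{1},b_{1}}^{a_{2},b_{2},p_{2}}(p_{1},k+\nu)=\sum_{j_{1},j_{2}}\binom{p_{1}}{j_{1}}\binom{p_{2}}{j_{2}}S_{a_{1},0}^{a_{2},0,p_{2}-j_{2}}(p_{1}-j_{1},\nu)\,S_{a_{1},b_{1}}^{a_{2},b_{2},j_{2}}(j_{1},k),
\]
which, after renaming $\nu\mapsto\mu$, is exactly (\ref{3.44}). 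I do not anticipate a serious obstacle; the main conceptual point is the double use of the defining identity (\ref{2.16})---once in the variable $m$ producing the factor $S_{a_{1},b_{1}}^{a_{2},b_{2},j_{2}}(j_{1},k)$, and once in the variable $\mu$ with zero constant term producing the factor $S_{a_{1},0}^{a_{2},0,p_{2}-j_{2}}(p_{1}-j_{1},\mu)$, so that both GSN families on the right-hand side arise naturally from a single binomial-plus-Vandermonde manipulation.
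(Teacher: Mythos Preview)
Your argument is correct and genuinely different from the paper's. The paper proceeds by induction on $k$: the base case $k=0$ reduces to the explicit formula~(\ref{3.1}) via the value $S_{a_{1},b_{1}}^{a_{2},b_{2},j_{2}}(j_{1},0)=b_{1}^{j_{1}}b_{2}^{j_{2}}$, and the inductive step relies on Lemma~\ref{Lemma2} (formula~(\ref{3.9})) to pass from $k$ to $k+1$. By contrast, your approach is a direct coefficient-comparison: you treat $m$ and $\mu$ as independent indeterminates, expand $(a_{1}(m+\mu)+b_{1})^{p_{1}}(a_{2}(m+\mu)+b_{2})^{p_{2}}$ in the product basis $\{\binom{m}{k}\binom{\mu}{\nu}\}$ two ways---once via the binomial theorem followed by two applications of the defining identity~(\ref{2.16}), once via~(\ref{2.16}) followed by Vandermonde---and read off the identity from the uniqueness of coefficients. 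Your route is more conceptual and self-contained (it does not invoke Lemma~\ref{Lemma2} or any induction), and it makes transparent \emph{why} the convolution factors the way it does; the paper's inductive proof, on the other hand, keeps everything at the level of the explicit formula and the shift recurrence, which fits its running theme of deriving identities from~(\ref{3.9}) and~(\ref{3.5}).
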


\begin{proof}
We proceed by induction on $k$. For $k=0$ we have%
\begin{eqnarray*}
&&\sum_{j_{2}=0}^{p_{2}}\sum_{j_{1}=0}^{p_{1}}\binom{p_{1}}{j_{1}}\binom{%
p_{2}}{j_{2}}S_{a_{1},0}^{a_{2},0,p_{2}-j_{2}}\left( p_{1}-j_{1},\mu \right)
S_{a_{1},b_{1}}^{a_{2},b_{2},j_{2}}\left( j_{1},0\right) \\
&=&\sum_{j_{2}=0}^{p_{2}}\sum_{j_{1}=0}^{p_{1}}\binom{p_{1}}{j_{1}}\binom{%
p_{2}}{j_{2}}b_{1}^{j_{1}}b_{2}^{j_{2}}S_{a_{1},0}^{a_{2},0,p_{2}-j_{2}}%
\left( p_{1}-j_{1},\mu \right) \\
&=&\sum_{j_{2}=0}^{p_{2}}\sum_{j_{1}=0}^{p_{1}}\binom{p_{1}}{j_{1}}\binom{%
p_{2}}{j_{2}}b_{1}^{j_{1}}b_{2}^{j_{2}}\frac{1}{\mu !}\sum_{s=0}^{\mu
}\left( -1\right) ^{s}\binom{\mu }{s}\left( a_{1}\left( \mu -s\right)
\right) ^{p_{1}-j_{1}}\left( a_{2}\left( \mu -s\right) \right) ^{p_{2}-j_{2}}
\\
&=&\frac{1}{\mu !}\sum_{s=0}^{\mu }\left( -1\right) ^{s}\binom{\mu }{s}%
\left( a_{1}\left( \mu -s\right) +b_{1}\right) ^{p_{1}}\left( a_{2}\left(
\mu -s\right) +b_{2}\right) ^{p_{2}} \\
&=&S_{a_{1},b_{1}}^{a_{2},b_{2},p_{2}}\left( p_{1},\mu \right) ,
\end{eqnarray*}

as desired. If formula (\ref{3.44}) is true for a given $k\in \mathbb{N}$,
we have (by using (\ref{3.9}))%
\begin{eqnarray*}
&&\sum_{j_{2}=0}^{p_{2}}\sum_{j_{1}=0}^{p_{1}}\binom{p_{1}}{j_{1}}\binom{%
p_{2}}{j_{2}}S_{a_{1},0}^{a_{2},0,p_{2}-j_{2}}\left( p_{1}-j_{1},\mu \right)
S_{a_{1},b_{1}}^{a_{2},b_{2},j_{2}}\left( j_{1},k+1\right) \\
&=&\frac{1}{k+1}\sum_{j_{2}=0}^{p_{2}}\sum_{j_{1}=0}^{p_{1}}\binom{p_{1}}{%
j_{1}}\binom{p_{2}}{j_{2}}S_{a_{1},0}^{a_{2},0,p_{2}-j_{2}}\left(
p_{1}-j_{1},\mu \right) \left(
S_{a_{1},a_{1}+b_{1}}^{a_{2},a_{2}+b_{2},j_{2}}\left( j_{1},k\right)
-S_{a_{1},b_{1}}^{a_{2},b_{2},j_{2}}\left( j_{1},k\right) \right) \\
&=&\frac{1}{k+1}\left( 
\begin{array}{c}
\sum_{j_{2}=0}^{p_{2}}\sum_{j_{1}=0}^{p_{1}}\binom{p_{1}}{j_{1}}\binom{p_{2}%
}{j_{2}}S_{a_{1},0}^{a_{2},0,p_{2}-j_{2}}\left( p_{1}-j_{1},\mu \right)
S_{a_{1},a_{1}+b_{1}}^{a_{2},a_{2}+b_{2},j_{2}}\left( j_{1},k\right) \\ 
-\sum_{j_{2}=0}^{p_{2}}\sum_{j_{1}=0}^{p_{1}}\binom{p_{1}}{j_{1}}\binom{p_{2}%
}{j_{2}}S_{a_{1},0}^{a_{2},0,p_{2}-j_{2}}\left( p_{1}-j_{1},\mu \right)
S_{a_{1},b_{1}}^{a_{2},b_{2},j_{2}}\left( j_{1},k\right)%
\end{array}%
\right) \\
&=&\frac{1}{k+1}\left( \binom{k+\mu }{k}%
S_{a_{1},a_{1}+b_{1}}^{a_{2},a_{2}+b_{2},p_{2}}\left( p_{1},k+\mu \right) -%
\binom{k+\mu }{k}S_{a_{1},b_{1}}^{a_{2},b_{2},p_{2}}\left( p_{1},k+\mu
\right) \right) \\
&=&\frac{1}{k+1}\binom{k+\mu }{k}\left(
S_{a_{1},a_{1}+b_{1}}^{a_{2},a_{2}+b_{2},p_{2}}\left( p_{1},k+\mu \right)
-S_{a_{1},b_{1}}^{a_{2},b_{2},p_{2}}\left( p_{1},k+\mu \right) \right) \\
&=&\frac{1}{k+1}\binom{k+\mu }{k}\left( k+\mu +1\right)
S_{a_{1},b_{1}}^{a_{2},b_{2},p_{2}}\left( p_{1},k+\mu +1\right) \\
&=&\binom{k+\mu +1}{k+1}S_{a_{1},b_{1}}^{a_{2},b_{2},p_{2}}\left(
p_{1},k+\mu +1\right) ,
\end{eqnarray*}%
as wanted.
\end{proof}

(The case $a_{1}=a_{2}=1,b_{1}=b_{2}=0$ of (\ref{3.44}) is formula (6.28),
p. 265 in \cite{G-K-P}.)

\section{\label{Sec5}Further Results: Recurrence and Differential Operator}

The GSN $S_{1,b,r}^{\alpha _{s},\beta _{s},r_{s},p_{s}}\left( p,k\right) $
satisfy the recurrence%
\begin{equation}
S_{1,b,r}^{\alpha _{s},\beta _{s},r_{s},p_{s}}\left( p,k\right)
=r!\sum_{t=0}^{r}\frac{1}{t!}\binom{b+k-t}{r-t}S_{1,b,r}^{\alpha _{s},\beta
_{s},r_{s},p_{s}}\left( p-1,k-t\right) .  \label{5.1}
\end{equation}

In terms of Differential Operators, the GSN $S_{1,b,r}\left( p,k\right) $
can be defined by%
\begin{equation}
\left( r!\sum_{j=0}^{r}\frac{\binom{b}{r-j}}{j!}x^{j}\frac{d^{j}}{dx^{j}}%
\right) ^{p}=\sum_{k=0}^{rp}S_{1,b,r}\left( p,k\right) x^{k}\frac{d^{k}}{%
dx^{k}}.  \label{5.2}
\end{equation}

Formulas (\ref{5.1}) and (\ref{5.2}) are (some of) the main results of the
second part of this work \cite{Pi3}.

\_\_\_\_\_\_\_\_\_\_\_\_\_\_\_\_\_\_\_\_\_\_\_\_\_\_\_\_\_\_\_\_\_\_\_\_\_\_%
\_\_\_\_\_\_\_\_\_\_\_\_\_\_\_\_\_\_\_\_\_\_\_\_\_\_\_\_\_\_\_\_\_\_\_\_\_\_%
\_\_\_\_\_\_\_\_\_\_\_\_\_\_\_\_\_\_\_\_\_\_\_\_\_\_\_\_\_\_\_\_\_\_\_

2010 Mathematics Subject Classification: Primary 11B73; Secondary 11B83.

Keywords: generalized Stirling numbers of the second kind, Stirling number
identities.

\end{document}